\title{On log surfaces}
\author{Osamu Fujino} 
\author{Hiromu Tanaka}
\date{2012/5/11}
\subjclass[2010]{14E30}
\keywords{contraction theorem, algebraic spaces, Frobenius map, 
vanishing theorem, minimal model theory, algebraic surfaces}
\address{Department of Mathematics, Faculty of Science, 
Kyoto University, Kyoto 606-8502, Japan}
\email{fujino@math.kyoto-u.ac.jp}
\address{Department of Mathematics, Faculty of Science, 
Kyoto University, Kyoto 606-8502, Japan}
\email{tanakahi@math.kyoto-u.ac.jp}
\newcommand{\Pic}[0]{{\operatorname{Pic}}}
\newcommand{\Supp}[0]{{\operatorname{Supp}}}
\newcommand{\Spec}[0]{{\operatorname{Spec}}}
\newcommand{\Bs}[0]{{\operatorname{Bs}}}
\newcommand{\ch}[0]{{\operatorname{char}}}
\newtheorem{thm}{Theorem}[section]
\newtheorem{cor}[thm]{Corollary}
\newtheorem{prop}[thm]{Proposition}
\newtheorem*{claim}{Claim}
\theoremstyle{definition}
\newtheorem{ex}[thm]{Example}
\newtheorem{defn}[thm]{Definition}
\newtheorem{rem}[thm]{Remark}
\newtheorem*{ack}{Acknowledgments}       
\newtheorem*{notation}{Notation}
\begin{document}

\maketitle 

\begin{abstract}
This paper is an announcement of the 
minimal model theory for log surfaces in all characteristics 
and contains some related results 
including a simplified proof of the Artin--Keel contraction theorem in the surface case. 
\end{abstract} 

\tableofcontents

\section{Introduction}\label{sec0} 

In this paper, we will work over an algebraically closed field $k$ of characteristic zero or 
positive characteristic. 
Let us recall the definition of {\em{log surfaces}}. 

\begin{defn}[Log surfaces] 
Let $X$ be a normal algebraic surface and let $\Delta$ be a boundary $\mathbb R$-divisor on
$X$ such that $K_X+\Delta$ is $\mathbb R$-Cartier. 
Then the pair $(X, \Delta)$ is called a {\em{log surface}}. 
We recall that  
a {\em{boundary}} $\mathbb R$-divisor is an effective $\mathbb R$-divisor whose 
coefficients are less than or equal to one. 
\end{defn}

In the preprint \cite{tanaka} together with
the forthcoming \cite{fujino},
we have obtained the following theorem, that is, 
the minimal model theory for log surfaces. 

\begin{thm}[{cf.~\cite{fujino} and \cite{tanaka}}]\label{main-ka}
Let $\pi:X\to S$ be a projective morphism from a log surface $(X, \Delta)$ to 
a variety $S$ over a field $k$ of arbitrary characteristic. 
Assume that one of the following conditions holds{\em{:}} 
\begin{itemize}
\item[(A)] $X$ is $\mathbb Q$-factorial, or 
\item[(B)] $(X, \Delta)$ is log canonical. 
\end{itemize}
Then we can run the log minimal model program over $S$ 
with respect to $K_X+\Delta$ and 
obtain a sequence of extremal contractions  
\begin{align*}
(X, \Delta)=(X_0, \Delta_0)
\overset{\varphi_0}\to (X_1, \Delta_1)\overset{\varphi_1}\to 
\cdots \\ 
\overset{\varphi_{k-1}}\to (X_k, \Delta_k)=(X^*, \Delta^*)
\end{align*} 
over $S$ 
such that 
\begin{itemize}
\item[(1)] {\em{(Minimal model)}} $K_{X^*}+\Delta^*$ is semi-ample over $S$ 
if $K_X+\Delta$ is pseudo-effective over $S$, or
\item[(2)] {\em{(Mori fiber space)}} there is a morphism 
$g:X^*\to C$ over $S$ such that 
$-(K_{X^*}+\Delta^*)$ is $g$-ample, $\dim C<2$, and 
the relative Picard number $\rho (X^*/C)=1$, 
if $K_X+\Delta$ is not pseudo-effective over $S$. 
\end{itemize}
We note that, in {\em{Case (A)}}, we do not assume that 
 $(X, \Delta)$ is log canonical. We also note that 
$X_i$ is $\mathbb Q$-factorial for every $i$ in {\em{Case (A)}} 
and that $(X_i, \Delta_i)$ is log canonical for every $i$ in {\em{Case (B)}}. 
Moreover, in both cases, if $X$ has only rational singularities, 
then so does $X_i$ by 
{\em{Theorem \ref{thm-b}}}. 
\end{thm}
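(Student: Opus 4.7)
The plan is to run the log MMP step by step and to verify at each stage that the next extremal contraction exists, that it preserves the relevant hypothesis, and that the process terminates. Since $X$ is a surface there are no small contractions and hence no flips: each $K_X+\Delta$-negative extremal ray is contracted by either a divisorial morphism to a normal surface or a fiber-type morphism whose target has dimension at most one, the latter ending the program with a Mori fiber space.

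The first task is a cone and contraction theorem for $(X,\Delta)$ over $S$. In characteristic zero this follows from Kawamata--Viehweg vanishing and the base-point-free theorem. In positive characteristic I would produce extremal rays by bend-and-break on a suitable resolution together with boundedness of extremal rational curves on surfaces, and then contract a negative extremal ray $R$ by choosing a supporting nef $\mathbb Q$-Cartier divisor $L$ with $L^{\perp}\cap\overline{NE}(X/S)=R$ and proving $\pi$-semi-ampleness of $L$ via the Artin--Keel contraction theorem for surfaces, whose simplified proof is announced in the abstract. In case (B) the target of the contraction is a priori only an algebraic space, and part of the work is to verify that Artin--Keel applies and produces a projective morphism over $S$.

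Given the contraction $\varphi:X\to X'$, I would check that $(X',\Delta'=\varphi_{\ast}\Delta)$ is again a log surface of the same type. In case (A), $\mathbb Q$-factoriality descends because contracting a single extremal curve drops the Picard number by one and leaves every Weil divisor $\mathbb Q$-Cartier; in case (B), log canonicity is preserved by the negativity lemma applied to the discrepancies along the exceptional curve. Termination is immediate, since each divisorial step strictly decreases the non-negative integer $\rho(X_i/S)$. Preservation of rational singularities at each step is handled by the quoted Theorem \ref{thm-b}.

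It remains to upgrade ``$K_{X^{\ast}}+\Delta^{\ast}$ nef over $S$'' to ``semi-ample over $S$'' in the pseudo-effective case. In characteristic zero this is abundance for log canonical surfaces and is classical. In positive characteristic I would again reduce to the Artin--Keel criterion, splitting according to the numerical dimension $\nu$ of $K_{X^{\ast}}+\Delta^{\ast}$; the hardest subcase is $\nu=1$, where one must analyse the base locus along the induced fibration. The main obstacle throughout is precisely this semi-ampleness step in positive characteristic: without Kawamata--Viehweg vanishing one is forced through Artin--Keel, and this is what dictates the overall architecture of the proof, in particular the detour through algebraic spaces in case (B).
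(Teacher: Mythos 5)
Your overall architecture (cone/contraction, descent of the hypothesis, termination by Picard number, then abundance) does match the program that this announcement attributes to \cite{fujino} and \cite{tanaka}, but two steps that you treat as routine are precisely where the content lies, and as written they do not go through. First, you cannot contract a negative extremal ray simply by ``choosing a supporting nef divisor $L$ and proving semi-ampleness via Artin--Keel'': Theorem \ref{main-thm} has the essential hypothesis that $L|_{\mathbb E(L)}$ is semi-ample, and verifying this is the actual work. It is exactly here that one needs to know that the curve being contracted is rational: from $(K_X+\Delta)\cdot C<0$ and $C^2<0$ one gets $(K_X+C)\cdot C<0$, hence $C\simeq \mathbb P^1$ by adjunction, and only then does the divisor $L=(-C^2)H+(H\cdot C)C$ of Corollary \ref{cor22} satisfy the hypothesis (for a non-rational $C$ with $k\ne\overline{\mathbb F}_p$, the restriction need not be torsion, and Proposition \ref{prop-keel} shows semi-ampleness genuinely fails without such input). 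Likewise your claim that $\mathbb Q$-factoriality ``descends because contracting a single extremal curve drops the Picard number by one and leaves every Weil divisor $\mathbb Q$-Cartier'' is circular: that the image point is $\mathbb Q$-factorial is the conclusion of Corollary \ref{cor22}, obtained from the construction of the contraction, not an automatic feature of divisorial contractions of normal surfaces. (Also, the detour through algebraic spaces occurs inside the proof of Theorem \ref{main-thm} itself, via Artin's contraction to an algebraic space and Nakai's criterion, not specifically in case (B).)

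Second, your abundance step is missing the non-vanishing theorem and misplaces the difficulty. To pass from ``$K_{X^*}+\Delta^*$ nef and pseudo-effective'' to ``semi-ample'' one must first show $\kappa\geq 0$, and in this paper's scheme that is where the dichotomy $k=\overline{\mathbb F}_p$ versus $k\ne\overline{\mathbb F}_p$ enters: Theorem \ref{thm31} (resting on the infinite rank of $\Pic^0(C)(k)$ for $k\ne\overline{\mathbb F}_p$) and Theorem \ref{thm-qfac} are highlighted as crucial for exactly this, and nothing of the sort appears in your proposal. Moreover the hard case of abundance is not $\nu=1$ (the paper states $\kappa=1$ is easy) but $\kappa=0$, i.e.\ Theorem \ref{thm0}, which is handled through indecomposable curves of canonical type (Proposition \ref{prop1}), whose behavior trichotomizes according to $\ch(k)=0$, $\ch(k)>0$ with $k\ne\overline{\mathbb F}_p$, and $k=\overline{\mathbb F}_p$; the big case is Theorem \ref{thm41}. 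Smaller omissions: $\Delta$ is an $\mathbb R$-divisor while the key abundance statements are proved for $\mathbb Q$-coefficients, so a perturbation/reduction argument is needed; the relative setting over $S$ is never really addressed; and in case (A) the pair need not be log canonical, so producing extremal rays by bend-and-break with boundedness of extremal curves is not available off the shelf and is not how the cited proofs proceed.
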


More precisely, we prove the cone theorem, the contraction theorem, and 
the abundance theorem for 
$\mathbb Q$-factorial log surfaces and log canonical surfaces 
with no further restrictions. 

Theorem \ref{main-ka} has been known 
in Case (B) when $S$ is a point and $\Delta$ is a $\mathbb Q$-divisor (cf.~\cite{fujita3}, \cite{koko}).
In \cite{fujino}, the first author obtained Theorem \ref{main-ka} in characteristic zero;
there are many $\mathbb Q$-factorial surfaces (i.e.~in Case (A)) which are not log canonical (i.e.~in Case (B)). 

In \cite{tanaka}, the second author establishes Theorem \ref{main-ka} in arbitrary positive characteristic. 
The arguments in \cite{fujino} heavily depend on a Kodaira type vanishing theorem, which
unfortunately fails in positive characteristic. 
The main part of discussion in \cite{tanaka}
in order to prove 
Theorem \ref{main-ka} 
is the Artin--Keel contraction theorem, which holds only in positive characteristic. 

We will give a simplified proof of the Artin--Keel contraction theorem in Section \ref{sec1}, which is one of the main 
purposes of this paper.  
 
Theorem \ref{main-ka} implies the following important corollary. 
For a more direct approach to Corollary \ref{cor13}, see Section \ref{sec4}.  

\begin{cor}\label{cor13}
Let $(X, \Delta)$ be a projective 
log surface. 
Assume that 
$X$ is $\mathbb Q$-factorial or $(X, \Delta)$ is log canonical.  
Then the log canonical ring 
$$
R(X, \Delta)=\bigoplus _{m \geq  0}H^0(X, \mathcal O_X(\llcorner m(K_X+\Delta)\lrcorner))
$$ 
is a finitely generated $k$-algebra. 
\end{cor}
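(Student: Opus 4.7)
The plan is to derive Corollary \ref{cor13} directly from Theorem \ref{main-ka} by running the log MMP and checking that the log canonical ring survives each extremal contraction and is finitely generated at the output. Since $\pi: X \to \Spec k$ is projective and one of the hypotheses (A), (B) holds, I would apply Theorem \ref{main-ka} to produce a sequence of $(K+\Delta)$-negative extremal contractions $\varphi_i: (X_i, \Delta_i) \to (X_{i+1}, \Delta_{i+1})$ terminating at some $(X^*, \Delta^*)$ of type (1) or (2).

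Next I would show that each $\varphi_i$ induces an isomorphism $R(X_i, \Delta_i) \cong R(X_{i+1}, \Delta_{i+1})$. Since $\varphi_i$ is a projective birational morphism contracting a single curve $E$ with $(K_{X_i}+\Delta_i)\cdot E < 0$ while $\varphi_i^*(K_{X_{i+1}}+\Delta_{i+1})\cdot E = 0$, the negativity lemma yields
\[
K_{X_i}+\Delta_i \;=\; \varphi_i^*(K_{X_{i+1}}+\Delta_{i+1}) + aE, \qquad a>0.
\]
A standard round-down and pushforward computation then produces $(\varphi_i)_*\mathcal{O}_{X_i}(\llcorner m(K_{X_i}+\Delta_i)\lrcorner) = \mathcal{O}_{X_{i+1}}(\llcorner m(K_{X_{i+1}}+\Delta_{i+1})\lrcorner)$ for every $m \geq 0$, and taking global sections yields the claimed isomorphism. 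Composing these reduces the problem to finite generation of $R(X^*, \Delta^*)$.

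In Case (2), $K_X+\Delta$ is not pseudo-effective, so if some $\llcorner m(K_{X^*}+\Delta^*)\lrcorner$ admitted a nonzero section then $m(K_{X^*}+\Delta^*)$ would be $\mathbb{R}$-linearly equivalent to an effective divisor and hence pseudo-effective, a contradiction; thus $R(X^*, \Delta^*) = k$ is trivially finitely generated. In Case (1), $K_{X^*}+\Delta^*$ is semi-ample, so I would invoke the semi-ample fibration theorem to produce a contraction $f: X^* \to Y$ and an ample $\mathbb{R}$-divisor $A$ on $Y$ with $K_{X^*}+\Delta^* \sim_{\mathbb{R}} f^*A$, and then conclude finite generation via a semi-ample section-ring theorem.

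The main obstacle is this last deduction, finite generation of the section ring of a semi-ample $\mathbb{R}$-divisor, since $\llcorner m(K_{X^*}+\Delta^*)\lrcorner$ is not a linear function of $m$. I would handle this by decomposing the graded ring along arithmetic progressions corresponding to the finitely many possible fractional-part patterns of $m\Delta^*$ and reducing each piece to the classical finite-generation result for semi-ample integral Cartier divisors pulled back from the base of the Iitaka fibration; alternatively, one can perturb $\Delta^*$ to a nearby semi-ample $\mathbb{Q}$-boundary and invoke the $\mathbb{Q}$-version directly.
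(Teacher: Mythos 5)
Your derivation is essentially the route the paper has in mind when it asserts that Theorem \ref{main-ka} implies Corollary \ref{cor13}: run the MMP, transfer the log canonical ring across each birational extremal contraction (the negativity lemma plus the standard comparison of global sections of the round-downs is indeed enough), and finish with abundance in case (1) and non-pseudo-effectivity in case (2) (in case (2) you implicitly use that non-pseudo-effectivity passes to $(X^*,\Delta^*)$, which is fine since $-(K_{X^*}+\Delta^*)$ is ample on the fibres of $g$ and these cover $X^*$). Note, however, that the only argument the paper actually writes out (Section \ref{sec4}, for $\ch(k)>0$) is more economical: it handles $\kappa(X,K_X+\Delta)\leq 1$ directly, and when $K_X+\Delta$ is big it contracts the curves $C$ with $(K_X+\Delta)\cdot C<0$ --- such $C$ have $C^2<0$, hence $(K_X+C)\cdot C<0$, hence $C\simeq \mathbb P^1$ by adjunction --- via Corollary \ref{cor22}, and then invokes only the nef-and-big case of abundance, Theorem \ref{thm41}. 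That route avoids the full strength of Theorem \ref{main-ka}, in particular the hard $\kappa=0$ abundance (Theorem \ref{thm0}) which your argument needs through conclusion (1).

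The genuine gap is your last step in case (1) when $\Delta$ has irrational coefficients, and neither proposed fix works: the fractional parts of $m\Delta^*$ are periodic in $m$ only when $\Delta^*$ is a $\mathbb Q$-divisor, so there are not finitely many patterns; and perturbing $\Delta^*$ to a rational boundary changes the graded ring, so finite generation of the perturbed ring says nothing about $R(X,\Delta)$. In fact no argument can close this gap, because semi-ampleness of an $\mathbb R$-divisor does not imply finite generation of its round-down section ring. For instance, let $X=\mathbb P^2$, let $C$ be a smooth quartic, and $\Delta=cC$ with $c\in (3/4,1)$ irrational: then $(X,\Delta)$ is klt, $X$ is $\mathbb Q$-factorial, and $K_X+\Delta$ is ample, yet $R(X,\Delta)$ is not finitely generated. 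Indeed, writing $K_X=-3L$, an element of $R_m$ may have order exactly $-\llcorner mc\lrcorner$ along $C$, whereas any product of elements of degrees $m_i\leq N$ has order along $C$ at least $-mc+\sum_i\{m_ic\}\geq -mc+(m/N)\min_{1\leq j\leq N}\{jc\}$, which exceeds $-\llcorner mc\lrcorner$ for $m\gg 0$; so no finite set of generators suffices. Consequently Corollary \ref{cor13} should be read with $\Delta$ a $\mathbb Q$-divisor (consistent with Theorem \ref{thm41} and with \cite{fujino}, \cite{tanaka}), and with that reading your Veronese/arithmetic-progression argument is the standard correct conclusion and the rest of your proposal goes through.
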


\begin{rem}Let $(X, \Delta)$ be a log canonical surface and let $f:Y\to X$ be 
its minimal resolution with $K_Y+\Delta_Y=f^*(K_X+\Delta)$. 
Then $(Y, \Delta_Y)$ is a $\mathbb Q$-factorial log surface. 
The abundance theorem and the finite generation of log canonical rings 
for $(X, \Delta)$ follow from those of $(Y, \Delta_Y)$. 
\end{rem}

In Section \ref{sec-5}, we discuss a key result on the {\em{indecomposable 
curves of canonical type}} for the proof of the abundance theorem for 
$\kappa =0$. 
The behavior of the indecomposable curves of canonical 
type varies in accordance with the cases:~(i) 
$\ch (k)=0$, (ii) $\ch (k)>0$ with 
$k\ne \overline {\mathbb F}_p$, and 
(iii) $k=\overline {\mathbb F}_p$. 
Section \ref{sec2} is devoted to the discussion of some relative vanishing theorems, which 
are elementary and hold in any characteristic. 
As applications, 
we give some results supplementary 
to the theory of algebraic surfaces in {\em{arbitrary}} characteristic. 

For the details of the proofs, we refer
the reader to \cite{fujino} and \cite{tanaka}. 

\begin{notation} 
For an $\mathbb R$-divisor $D$ on a normal surface $X$, we define the 
{\em{ronud-up}} $\ulcorner D\urcorner$, the {\em{round-down}} $\llcorner 
D\lrcorner$, and the {\em{fractional part}} $\{D\}$ of $D$. 
We note that $\sim _{\mathbb R}$ denotes the {\em{$\mathbb R$-linear equivalence}} of 
$\mathbb R$-divisors. 
Let $D$ be a $\mathbb Q$-Cartier $\mathbb Q$-divisor on a normal 
projective surface $X$. 
Then $\kappa (X, D)$ denotes the {\em{Iitaka--Kodaira dimension}} of $D$. 
Let $k$ be a field. Then $\ch (k)$ denotes the characteristic of 
$k$. Let $A$ be an abelian variety defined over an algebraically closed field 
$k$. Then we denote the $k$-rational points of $A$ by $A(k)$. 
\end{notation}

\begin{ack}
The first author was partially supported by the Grant-in-Aid for Young Scientists 
(A) $\sharp$20684001 from JSPS. 
He was also supported by the Inamori Foundation. 
He thanks Professors Fumio Sakai and Se\'an Keel for comments on \cite{fujino}.  
The authors would like to thank Professors Kenji Matsuki and Shigefumi Mori for 
stimulating discussions and useful comments, and 
Professor Atsushi Moriwaki for warm encouragement. 
\end{ack}

\section{the Artin--Keel contraction theorem}\label{sec1}

The following (see Theorem \ref{main-thm}) is Keel's base point free theorem 
for algebraic surfaces (cf.~\cite[0.2 Theorem]{keel}). 
Although his original result holds in {\em{any}} dimension, we only discuss 
it for surfaces here.  
The paper \cite{keel} attributes Theorem \ref{main-thm} to \cite{artin} 
even though it is not stated explicitly there. 
So, we call it the Artin--Keel contraction theorem in this paper. 
Theorem \ref{main-thm} will play crucial roles in 
the 
minimal model theory 
for log surfaces in positive characteristic. 
For the details, see \cite{tanaka}. 
Note that Theorem \ref{main-thm} fails in characteristic zero 
by \cite[3.0 Theorem]{keel}. 
The minimal model theory for 
log surfaces in characteristic zero 
heavily depends on a Kodaira type vanishing theorem (cf.~\cite{fujino}). 
The second author discusses the X-method for klt surfaces in positive 
characteristic in \cite{tanaka2}. 
For a related topic, see also \cite{cmm}. 

\begin{thm}[Artin, Keel]\label{main-thm} 
Let $X$ be a complete normal algebraic surface defined over an 
algebraically closed filed $k$ of positive characteristic and 
let $H$ be a nef and big Cartier divisor on $X$. 
We set  
$$
\mathcal E(H):=\{C\, |\, C\ \text{is a curve on $X$ and}\ C\cdot H=0\}. 
$$
Then $\mathcal E(H)$ consists of finitely many irreducible curves on $X$. 
Assume that $H|_{\mathbb E(H)}$ is semi-ample where 
$$\mathbb E(H)=\bigcup _{C\in \mathcal E(H)}C$$ with the 
reduced scheme structure. 
Then $H$ is semi-ample. Therefore, 
$$\Phi_{|mH|}:X\to Y$$ is a proper birational 
morphism onto a normal projective surface $Y$ which contracts 
$\mathbb E(H)$ and is an isomorphism outside $\mathbb E(H)$ for 
a sufficiently large and divisible positive integer. 
\end{thm}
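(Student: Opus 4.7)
The plan is to split Theorem \ref{main-thm} into two parts: first establishing finiteness of $\mathcal{E}(H)$ from nefness and bigness of $H$ alone, and then upgrading semi-ampleness of $H|_{\mathbb{E}(H)}$ to semi-ampleness on $X$ via a Frobenius lifting argument, where the positive characteristic hypothesis is essential.

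For finiteness, I would invoke Kodaira's lemma: since $H$ is nef and big, for some $m \gg 0$ one can write $mH \sim A + E$ with $A$ ample and $E$ effective. Any irreducible curve $C$ with $C \cdot H = 0$ satisfies $C \cdot A > 0$, forcing $C \cdot E < 0$, so $C$ must be a component of $\Supp E$. Hence $\mathcal{E}(H)$ is finite.

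For semi-ampleness, after replacing $H$ by a suitable multiple we may assume $H|_{\mathbb{E}(H)}$ is base point free. Since $H$ has degree zero on every component of $\mathbb{E}(H)$, the associated morphism is constant on each connected component, so a further multiple gives $\mathcal{O}_{\mathbb{E}(H)}(H) \cong \mathcal{O}_{\mathbb{E}(H)}$. The task then reduces to lifting the constant section $1 \in H^0(\mathbb{E}(H), \mathcal{O})$ to some $\tilde{s} \in H^0(X, \mathcal{O}_X(NH))$: its zero locus is linearly equivalent to $NH$ and disjoint from $\mathbb{E}(H)$, which combined with triviality on $\mathbb{E}(H)$ forces base-point-freeness of $NH$ on all of $X$. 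The induced morphism $\Phi_{|NH|}\colon X \to Y$ then contracts precisely the curves in $\mathcal{E}(H)$ and is an isomorphism on the complement, with normality and projectivity of $Y$ following from Stein factorization and bigness of $H$.

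The hard step is the lifting, whose obstruction lies in $H^1(X, \mathcal{I}_{\mathbb{E}(H)} \otimes \mathcal{O}_X(NH))$ and need not vanish a priori. Here I would appeal to the Frobenius endomorphism $F\colon X \to X$, which satisfies $F^*\mathcal{O}_X(H) = \mathcal{O}_X(pH)$ and pulls the ideal $\mathcal{I}_{\mathbb{E}(H)}$ back to its Frobenius power $\mathcal{I}_{\mathbb{E}(H)}^{[p]}$. Iterating, the obstruction class transports under $F^{n*}$ to an element of $H^1(X, \mathcal{I}_{\mathbb{E}(H)}^{[p^n]} \otimes \mathcal{O}_X(p^n H))$, and the central task is to force this class to die for $n$ sufficiently large. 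This is the main obstacle and the heart of the Artin--Keel mechanism; the simplified surface-only argument I would aim for exploits the two-dimensional setting together with the relative vanishing theorems developed in Section \ref{sec2}, bypassing the algebraic-space framework of Keel's general-dimensional treatment, in order to control the higher cohomology of Frobenius-powered ideal sheaves twisted by big nef line bundles.
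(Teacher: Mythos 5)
Your outline has the same overall shape as the paper's second proof (reduce to $\mathcal O_{\mathbb E(H)}(H)\simeq \mathcal O_{\mathbb E(H)}$, lift a trivializing section, conclude that the base locus misses $\mathbb E(H)$), but the heart of the argument is missing: you explicitly defer the step ``force the obstruction class to die,'' and the tool you propose for it --- the relative vanishing theorems of Section \ref{sec2} --- cannot do the job, since those are statements about $R^1\varphi_*$ for proper birational morphisms of surfaces, not about absolute $H^1(X,\cdot)$ of nef and big twists on $X$; moreover no characteristic-free vanishing can close this gap, because the theorem itself fails in characteristic zero (Proposition \ref{prop-keel}). The paper's mechanism is concrete: after passing to a resolution (which you also need for your finiteness step, since Kodaira's lemma requires an ample divisor and a complete normal surface need not be projective, while a smooth complete surface is), the Hodge index theorem makes the intersection matrix of $\mathcal E(H)$ negative definite, so one finds an effective divisor $D$ with $\Supp D=\mathbb E(H)$ and $-D\cdot C>0$ for all $C\in\mathcal E(H)$, and Kodaira's lemma upgrades this to: $mH-D$ is ample for some $m$. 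Fujita's vanishing theorem then gives $H^1(X,\mathcal O_X(lH-D))=0$ for all $l\geq m$. Crucially, the Frobenius is used not to transport an obstruction class but to upgrade triviality on the \emph{reduced} scheme $\mathbb E(H)$ to triviality of $qH$ on the \emph{thickening} $q\,\mathbb E(H)\geq D$, $q=p^r$, by pulling back $0\to\mathcal O_X(H-\mathbb E(H))\to\mathcal O_X(H)\to\mathcal O_{\mathbb E(H)}\to 0$ under the $q$-power Frobenius; only then does the section to be lifted live on $D$, where the Fujita vanishing applies and the lifting is automatic. Your $\mathcal I_{\mathbb E(H)}^{[p^n]}$ formulation could be recast this way, but as written neither the correct vanishing input nor the reason the Frobenius helps is identified, so the central step of the theorem is not proved. (The paper's first proof replaces this by Artin's contraction to a normal algebraic space, the theorem on formal functions, Artin's $\Pic(D)\to\Pic(E)$ lemma, and Nakai's criterion for algebraic spaces --- also quite different from what you sketch.)

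There is a second, smaller gap at the end: a single section of $\mathcal O_X(NH)$ whose zero locus is disjoint from $\mathbb E(H)$ gives only $\Bs|NH|\cap\mathbb E(H)=\emptyset$, not base point freeness of $NH$ on all of $X$; points outside $\mathbb E(H)$ could still lie in the base locus for all you have said. The paper closes this using exactly the ampleness of $mH-D$ with $\Supp D=\mathbb E(H)$ (so the base locus of large multiples of $H$ is a priori contained in $\mathbb E(H)$); alternatively one could quote a Zariski--Fujita type statement, but some such positivity of $H$ away from $\mathbb E(H)$ must be invoked, and it is the same Claim you skipped.
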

We give two different proofs of Theorem \ref{main-thm}. 
Proof 1 depends on Artin's arguments. 
On the other hand, Proof 2 uses Fujita's vanishing theorem. 

\begin{proof}[{\em{Proof $1$}}] 
It is sufficient to prove that $H$ is semi-ample. 
Let $f:Z\to X$ be a resolution of singularities. Then $\mathcal E(f^*H)$ consists of 
finitely many curves by the Hodge index theorem. Therefore, 
so does $\mathcal E(H)$. 
Note that $H$ is semi-ample if and only if $f^*H$ is semi-ample. 
We also note that 
$f^*H|_{\mathbb E(f^*H)}$ is semi-ample since so is $H|_{\mathbb E(H)}$. 
Thus, by replacing $X$ and $H$ with $Z$ and $f^*H$, we may assume that 
$X$ is a smooth projective surface. 
In this case, the intersection matrix of $\mathcal E(H)$ is negative definite 
by the Hodge index theorem. 

By Artin's contraction theorem (see \cite[Theorem 14.20]{bad}), 
there exists a morphism $g:X\to W$ where 
$W$ is a normal complete two-dimensional algebraic space such that 
$g(\mathbb E(H))$ is a finite set of points of $W$ and that 
$$g|_{X\setminus \mathbb E(H)}:X\setminus \mathbb E(H)\to W\setminus 
g(\mathbb E(H))$$ is an isomorphism. 

By Artin (cf.~\cite[Lemma (2.10)]{artin} and \cite[Step 1 in the proof 
of Theorem 14.21]{bad}), 
there exists an effective Cartier divisor $E$ with $\Supp (E)=\mathbb E(H)$ such that 
for every effective divisor 
$D\geq E$ with $\Supp D=\mathbb E(H)$, the restriction map $\Pic (D)\to \Pic (E)$ is an 
isomorphism. 

By replacing $H$ with a multiple, 
we may assume that $H|_{\mathbb E(H)}$ is free. 
Therefore, $\mathcal O_{\mathbb E(H)}(H)\simeq \mathcal O_{\mathbb E(H)}$. 

Let $p$ be the characteristic of $k$ and 
let $r$ be a positive integer such that 
$q\mathbb E(H)\geq E$ where $q=p^r$. We consider the (ordinary) $q$-th 
Frobenius morphism $F:X\to X$. 
By pulling back the exact sequence 
$$
0\to \mathcal O_X(H-\mathbb E(H))\to \mathcal O_X(H)\to \mathcal O_{\mathbb E(H)}\to 0
$$
by $F$, we obtain the exact sequence 
$$
0\to \mathcal O_X(qH-q\mathbb E(H))\to \mathcal O_X(qH)\to \mathcal O_{q\mathbb E(H)}
(qH)\to 0. 
$$
Therefore, $\mathcal O_{q\mathbb E(H)}(qH)\simeq 
\mathcal O_{q\mathbb E(H)}$. By the above argument, 
$\mathcal O_D(qH)\simeq \mathcal O_D$ for every 
effective divisor $D\geq E$ with $\Supp D=\mathbb E(H)$.  

Let $w\in g(\mathbb E(H))$ be a point. 
Then, by the theorem of holomorphic functions (cf.~\cite[Theorem 3.1]{knu}), 
we have 
$$(g_*\mathcal O_X(qH))^{\wedge}_w\simeq (g_*\mathcal O_X)^{\wedge}_w\simeq 
\widehat{\mathcal O_{W, w}}. $$
Therefore, $g_*\mathcal O_X(qH)$ is a line bundle 
on $W$. 
By considering the natural map
$$
g^*g_*\mathcal O_X(qH)\to \mathcal O_X(qH), 
$$ 
we obtain that $g^*g_*\mathcal O_X(qH)\simeq \mathcal O_X(qH)$ since 
the intersection matrix of $\mathcal E(H)$ is 
negative definite. 
This means that 
$B:=g_*qH$ is a Cartier divisor on $W$ and $g^*B=qH$. By Nakai's 
criterion, $B$ is ample on $W$. We note that 
Nakai's criterion holds for complete algebraic spaces. 
Therefore, $H$ is semi-ample. 
\end{proof}

\begin{proof}[{\em{Proof $2$}}] 
It is sufficient to prove that $H$ is semi-ample. 
By the same argument as in Proof $1$, we may assume that 
$X$ is smooth. 
We may further assume that $\mathcal O_
{\mathbb E(H)}(H)\simeq \mathcal O_{\mathbb E(H)}$ by replacing $H$ with a multiple. 
Since the intersection matrix of $\mathcal E(H)$ is negative 
definite, we can find an effective Cartier divisor $D$ such that 
$\Supp D=\mathbb E(H)$ and that 
$-D\cdot C>0$ for every $C\in \mathcal E(H)$. 
\begin{claim}
There exists a positive integer $m$ such that $mH-D$ is ample. 
\end{claim}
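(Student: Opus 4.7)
The plan is to verify the Nakai--Moishezon criterion for $mH - D$ when $m$ is sufficiently large, by showing both $(mH - D)^2 > 0$ and $(mH - D) \cdot C > 0$ for every irreducible curve $C$ on $X$. The self-intersection is immediate: since $D$ is supported on $\mathbb E(H)$ and $H$ is numerically trivial on $\mathcal E(H)$, we have $H \cdot D = 0$, so $(mH - D)^2 = m^2 H^2 + D^2$, which is positive for $m \gg 0$ because $H$ is nef and big (hence $H^2 > 0$).

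For the intersection with an irreducible curve $C$, the case $C \subset \mathbb E(H)$ is immediate as well: $(mH - D) \cdot C = -D \cdot C > 0$ by construction of $D$, regardless of $m$. The substantive point is to find a single $m$ for which $(mH - D) \cdot C > 0$ holds for every irreducible $C \not\subset \mathbb E(H)$ simultaneously. To secure this uniformity I would apply Kodaira's lemma: since $H$ is big, there exist an integer $N > 0$ and an effective divisor $D'$ with $NH \sim D + D'$. Let $C_1, \dots, C_s$ denote the finitely many irreducible components of $D'$ that are not contained in $\mathbb E(H)$. For an irreducible $C \not\subset \mathbb E(H)$ with $C \ne C_j$ for every $j$, the curve $C$ is not a component of $D'$, so $D' \cdot C \ge 0$, hence $D \cdot C \le N H \cdot C$. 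Thus $(mH - D) \cdot C \ge (m - N) H \cdot C > 0$ whenever $m > N$, using $H \cdot C \ge 1$ (since $H$ is Cartier and $C \notin \mathcal E(H)$). The finitely many $C_j$ are then handled individually by taking $m$ larger than $D \cdot C_j / H \cdot C_j$ for each $j$.

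The one input that is not entirely formal is Kodaira's lemma itself, the production of $N$ and $D'$ with $NH \sim D + D'$. I expect this to be the main substantive step, but it is standard: the bigness of $H$ gives $h^0(X, \mathcal O_X(NH)) \sim \tfrac{1}{2} N^2 H^2$, while $h^0(D, \mathcal O_D(NH))$ is bounded in $N$ because $H \cdot D = 0$ makes $\chi(D, \mathcal O_D(NH))$ independent of $N$. The short exact sequence $0 \to \mathcal O_X(NH - D) \to \mathcal O_X(NH) \to \mathcal O_D(NH) \to 0$ then forces $h^0(X, \mathcal O_X(NH - D)) > 0$ once $N$ is large, providing the desired $D'$.
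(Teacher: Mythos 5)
Your proposal is correct and follows essentially the same route as the paper: Kodaira's lemma produces an effective decomposition of a multiple of $H$ minus $D$, which shows that only finitely many curves can have nonpositive intersection with $mH-D$, and enlarging $m$ then yields ampleness via the Nakai--Moishezon criterion (the paper phrases the last step as ``big and positive on all curves,'' you check $(mH-D)^2>0$ directly using $H\cdot D=0$). The only loose point is your justification that $h^0(D,\mathcal O_D(NH))$ is bounded: constancy of $\chi(D,\mathcal O_D(NH))$ alone does not bound $h^0$ (one must also control $h^1$, e.g.\ by filtering $\mathcal O_D$ by the components of $D$ and using that $\mathcal O_D(NH)$ has degree zero on each component), but this is a standard fact and the paper itself simply invokes Kodaira's lemma without proof.
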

\begin{proof}[Proof of {\em{Claim}}]
By Kodaira's lemma, we can find a positive integer $k$, 
an ample Cartier divisor $A$, and an effective divisor $B$ 
such that $kH-D\sim A+B$. 
If $(kH-D)\cdot C\leq 0$ for some curve $C$, then $C\subset \Supp B$ and 
$C\not\in \mathcal E(H)$. Therefore, if $m\gg k$, then $(mH-D)\cdot C>0$ for 
every curve $C$ on $X$. This implies $mH-D$ is ample since $mH-D$ is a big divisor. 
\end{proof}

By replacing $mH-D$ with a multiple, we may assume that 
$mH-D$ is very ample and 
$H^1(X, \mathcal O_X(lH-D))=0$ for every $l\geq m$ by 
Fujita's vanishing theorem (see \cite[Theorem (1)]{fujita-lnm} and \cite[(5.1) Theorem]{fujita2}). 
Let $p$ be the characteristic of $k$ and let $r$ be a positive integer 
such that $q\mathbb E(H)\geq D$ where $q=p^r$. 
By the same argument as in Proof $1$, 
$\mathcal O_{q\mathbb E(H)}(qH)\simeq \mathcal O_{q\mathbb E(H)}$. Therefore, 
$qH|_D\sim 0$. 
Without loss of generality, we may further assume that 
$q\geq m$. 
By the exact sequence 
$$
0\to H^0(X, \mathcal O_X(qH-D))\to H^0(X, \mathcal O_X(qH))\to 
H^0(D, \mathcal O_D(qH))\to 0, 
$$ 
$\Bs |qH|\cap \mathbb E(H)=\emptyset$ where 
$\Bs |qH|$ is the base locus of the linear system 
$|qH|$. 
Since $mH-D$ is ample with $\Supp D=\mathbb E(H)$, we obtain that 
$H$ is semi-ample. 
\end{proof}

\begin{cor}\label{cor22} Let 
$X$ be a $\mathbb Q$-factorial 
projective surface defined over an algebraically closed 
field of positive characteristic. 
Let $C$ be a curve on $X$ such that 
$C\simeq \mathbb P^1$ and $C^2<0$. 
Then we can contract $C$ to a $\mathbb Q$-factorial point. 
\end{cor}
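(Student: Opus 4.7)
The plan is to construct a nef and big Cartier divisor $H$ on $X$ whose only $H$-trivial curve is $C$, invoke Theorem \ref{main-thm} to produce the contraction, and then upgrade the resulting birational morphism to a $\mathbb Q$-factorial contraction by descending Cartier divisors along the lines of Proof $1$ of Theorem \ref{main-thm}.

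For the construction, fix an ample Cartier divisor $A$ on $X$. Since $X$ is $\mathbb Q$-factorial, $C$ is $\mathbb Q$-Cartier, and we may set $\lambda:=-(A\cdot C)/C^2>0$ and consider $H_0:=A+\lambda C$. By construction $H_0\cdot C=0$, while for any irreducible curve $D\ne C$ we have $C\cdot D\geq 0$ and $A\cdot D>0$, so $H_0\cdot D>0$. Thus $H_0$ is nef with $\mathcal E(H_0)=\{C\}$, and $H_0^2=H_0\cdot A=A^2+\lambda(A\cdot C)>0$, so $H_0$ is big. Clearing denominators, choose a positive integer $n$ such that $H:=nH_0$ is Cartier; the listed properties are inherited. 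Since $C\simeq\mathbb P^1$ and $H\cdot C=0$, we have $H|_C\simeq\mathcal O_{\mathbb P^1}$, which is semi-ample, so Theorem \ref{main-thm} applies and yields a birational morphism $g\colon X\to Y$ onto a normal projective surface $Y$ that contracts $C$ to a point $y$ and is an isomorphism over $Y\setminus\{y\}$.

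It remains to show that $Y$ is $\mathbb Q$-factorial at $y$. Given a Weil divisor $D$ on $Y$, let $\widetilde D:=g_*^{-1}D$ be its strict transform; $\mathbb Q$-factoriality of $X$ ensures $\widetilde D$ is $\mathbb Q$-Cartier. Set $\alpha:=-(\widetilde D\cdot C)/C^2\in\mathbb Q$, so that $M:=\widetilde D+\alpha C$ satisfies $M\cdot C=0$, and pick a positive integer $N$ making $NM$ Cartier. Since $C\simeq\mathbb P^1$ and $NM\cdot C=0$, we have $\mathcal O_C(NM)\simeq\mathcal O_C$, which is exactly the hypothesis needed to start the Frobenius--Artin descent argument of Proof $1$ of Theorem \ref{main-thm}. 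Running that argument verbatim with $NM$ in place of $qH$ produces a Frobenius power $q=p^r$ together with a Cartier divisor $B$ on $Y$ such that $g^*B=qNM$; applying $g_*$ and using $g_*C=0$ yields $qND=B$ as Weil divisors, so $qND$ is Cartier and $D$ is $\mathbb Q$-Cartier, as required.

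The main obstacle is the final descent step: one must verify that the Frobenius-plus-formal-functions machinery of Proof $1$ goes through for a divisor $M$ that is merely Cartier with $M\cdot C=0$, rather than for a nef and big $H$. Nefness and bigness were used there only to produce the contraction $g$ itself and to guarantee $\mathcal O_C(H)\simeq\mathcal O_C$; since the contraction has already been built and the triviality on $C$ still holds, Artin's lemma on $\Pic(D)\to\Pic(E)$, the theorem of formal functions, and the negative definiteness of $(C^2)$ (to promote $g^*g_*\mathcal O_X(qNM)\to\mathcal O_X(qNM)$ to an isomorphism) all apply unchanged.
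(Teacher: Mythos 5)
Your construction of the contraction is exactly the paper's argument: your $H_0=A+\lambda C$ is, up to rescaling, the divisor $L=(-C^2)H+(H\cdot C)C$ used in the paper, and the application of Theorem \ref{main-thm} (degree $0$ on $C\simeq\mathbb P^1$ gives semi-ampleness of the restriction) is the same. The paper's sketch stops there and leaves the $\mathbb Q$-factoriality of the image to the references, so the interesting part of your proposal is the descent step, and that is where there is a genuine gap. Proof $1$ of Theorem \ref{main-thm} is \emph{not} run on the original surface: its first move is to replace $X$ by a resolution, and every ingredient you want to reuse is then applied on a \emph{smooth} surface where the contracted curves are Cartier. Artin's lemma on the stabilization of $\Pic(D)\to\Pic(E)$ (\cite[Lemma (2.10)]{artin}, \cite[Theorem 14.21]{bad}) is a statement about effective divisors on a nonsingular surface; the identification of the Frobenius preimage of $C$ with the thickening $qC$, and the exact sequence $0\to\mathcal O_X(NM-C)\to\mathcal O_X(NM)\to\mathcal O_C(NM)\to 0$ twisted and pulled back by $F$, use that $C$ is Cartier. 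In your situation $X$ is only $\mathbb Q$-factorial, $C$ may pass through (possibly non-rational) singular points, $C$ need not be Cartier, $F^{-1}(C)$ is cut out by $\mathcal I_C^{[q]}$ rather than by $\mathcal O_X(-qC)$, and even the cofinality of the subschemes defined by the reflexive ideals $\mathcal O_X(-nC)$ among the infinitesimal neighborhoods needed for the theorem on formal functions requires justification. So ``runs verbatim'' is not true as stated, and the final paragraph's claim that only nefness and bigness were used elsewhere misses the reduction to the smooth case.

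The strategy is salvageable in two ways. One is to do what Proof $1$ itself does: pass to a resolution $f\colon Z\to X$ and descend $f^*\mathcal O_X(NM)$ along $g\circ f\colon Z\to Y$; here the reduced fiber of $g\circ f$ over $y$ is $\widetilde C\cup(\text{part of }\Exc(f))$, and the restriction of $f^*\mathcal O_X(NM)$ to it is the pullback of $\mathcal O_X(NM)|_C\simeq\mathcal O_C$, hence trivial, so the Frobenius--Artin--formal functions machinery applies on the smooth surface $Z$ and gives $q\,f^*(NM)=(g\circ f)^*B$ for some Cartier $B$ on $Y$. The cleaner route, which avoids reopening Proof $1$ altogether, is to apply Theorem \ref{main-thm} once more as a black box: for $l\gg 0$ the Cartier divisor $NM+lH$ is nef and big with $\mathcal E(NM+lH)=\{C\}$ and trivial restriction to $C$, so it is semi-ample and its fibration is again $g$; comparing with the fibration of $H$, suitable multiples of both $H$ and $NM+lH$ are pullbacks of Cartier divisors on $Y$, hence so is a multiple of $NM$, and pushing forward gives that a multiple of $D$ is Cartier. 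Either repair completes your argument; as written, the descent step does not.
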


\begin{proof}[Sketch of the proof] 
Let $H$ be a very ample Cartier divisor 
on $X$. 
We set $L=(-C^2)H+(H\cdot C)C$. 
Then $L$ is nef and big. 
Note that $L|_C$ is semi-ample since 
$C\simeq \mathbb P^1$ and $L\cdot C=0$. 
By applying Theorem \ref{main-thm} to $L$, 
we have a desired contraction morphism. 
\end{proof}

Since $\Pic ^0(V)(k)$ is a torsion group for any projective variety 
$V$ defined over $k=\overline {\mathbb F}_p$, 
we obtain the following corollary. 

\begin{cor}[{cf.~\cite[0.3 Corollary]{keel}}]\label{cor23} 
Let $X$ be a normal projective surface over $k=\overline {\mathbb F}_p$ 
and let $D$ be a nef and big Cartier divisor on $X$. 
Then $D$ is semi-ample. 
\end{cor}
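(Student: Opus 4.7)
The plan is to apply the Artin--Keel contraction theorem (Theorem~\ref{main-thm}) to the nef and big divisor $D$. Since $D$ itself is already nef and big, the only remaining hypothesis to verify is the semi-ampleness of $D|_{\mathbb E(D)}$. First, every $C\in \mathcal E(D)$ satisfies $D\cdot C=0$ by definition, so $L:=\mathcal O_X(D)|_{\mathbb E(D)}$ restricts to a degree-zero line bundle on each irreducible component of the reduced projective scheme $\mathbb E(D)$; in particular $L$ is numerically trivial.

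Next I would show that $L$ is a torsion element of $\Pic(\mathbb E(D))$, for then some multiple $L^{\otimes m}$ is isomorphic to $\mathcal O_{\mathbb E(D)}$, hence globally generated, and so $L$ is semi-ample. If $\mathbb E(D)$ were irreducible (hence a projective variety), this would be immediate from the remark preceding the corollary: $\Pic^0(V)(k)$ is torsion for any projective variety $V$ over $k=\overline{\mathbb F}_p$. In the general reducible case I would pass to the normalization $\nu\colon \widetilde{\mathbb E(D)} = \bigsqcup_i \tilde C_i \to \mathbb E(D)$ and analyze the pullback $\nu^*\colon \Pic^0(\mathbb E(D)) \to \prod_i \Pic^0(\tilde C_i)$. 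The target has torsion $k$-points by the remark applied componentwise. Its kernel is described via the normalization short exact sequence $1\to \mathcal O_{\mathbb E(D)}^{\times}\to \nu_*\mathcal O_{\widetilde{\mathbb E(D)}}^{\times}\to \mathcal Q\to 1$ as a connected affine algebraic group built from copies of $\mathbb G_m$ and $\mathbb G_a$ at the non-normal points. Both $\mathbb G_m(k)=\overline{\mathbb F}_p^{\times}$ and $\mathbb G_a(k)=\overline{\mathbb F}_p$ are torsion groups, so the kernel has torsion $k$-points as well. Therefore $\Pic^0(\mathbb E(D))(k)$ is torsion, and in particular so is $L$.

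Once $L^{\otimes m}\simeq \mathcal O_{\mathbb E(D)}$ for some $m>0$, the restriction $D|_{\mathbb E(D)}$ is semi-ample, and Theorem~\ref{main-thm} delivers the semi-ampleness of $D$. The main technical obstacle is precisely the extension of the torsion statement from irreducible projective varieties (where the preceding remark applies directly) to the possibly reducible reduced curve $\mathbb E(D)$; this is handled by the normalization analysis above, which depends crucially on the fact that both the multiplicative and additive groups of $\overline{\mathbb F}_p$ are themselves torsion.
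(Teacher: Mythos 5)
Your proposal is correct and follows essentially the same route as the paper, which deduces the corollary from Theorem \ref{main-thm} together with the remark that $\Pic^0(V)(k)$ is torsion over $k=\overline{\mathbb F}_p$; your normalization argument simply fills in the reducible case of that torsion statement. (A slightly quicker way to see it: $\Pic^0(\mathbb E(D))$ is an algebraic group over $\overline{\mathbb F}_p$, and every $k$-point is defined over some finite subfield $\mathbb F_q$, hence lies in the finite group of $\mathbb F_q$-points and is torsion.)
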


In \cite[Section 3]{keel}, Keel 
obtained an interesting example. 

\begin{prop}[{cf.~\cite[3.0 Theorem]{keel}}]\label{prop-keel} 
Let $C$ be a smooth projective curve of genus $g\geq 2$ over an 
algebraically closed filed $k$. 
We consider $S=C\times C$. 
We set $L=\pi_1^*K_C+\Delta$ where 
$\Delta\subset S$ is the diagonal and $\pi_1:S\to C$ is the 
first projection. 
Then $L$ is semi-ample if and only if 
the characteristic of $k$ is positive. 
\end{prop}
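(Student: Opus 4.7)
The plan is to verify the hypotheses of Theorem \ref{main-thm} directly when $\ch(k)>0$, and in characteristic zero to obstruct semi-ampleness by an infinitesimal analysis along $\Delta$.

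First I would compute intersection numbers on $S=C\times C$. From $K_S=\pi_1^*K_C+\pi_2^*K_C$ and adjunction, $\Delta|_\Delta\cong -K_C$ under the identification $\Delta\xrightarrow{\sim}C$ induced by $\pi_1$, giving $\Delta^2=2-2g$, $L\cdot\Delta=(2g-2)+(2-2g)=0$, and $L^2=2(2g-2)+(2-2g)=2g-2>0$. For every irreducible curve $D\neq\Delta$ both $\pi_1^*K_C\cdot D\geq 0$ and $\Delta\cdot D\geq 0$, with strict inequality for $\pi_1^*K_C\cdot D$ unless $D$ is a fibre of $\pi_1$ (in which case $\Delta\cdot D=1$). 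Hence $L$ is nef and big with $\mathcal E(L)=\{\Delta\}$, and $L|_\Delta=K_C+(-K_C)=\mathcal O_\Delta$ is trivially semi-ample. When $\ch(k)>0$, Theorem \ref{main-thm} applied to $L$ immediately gives that $L$ is semi-ample.

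Now suppose $\ch(k)=0$ and, for contradiction, that $L$ is semi-ample. Some $|mL|$ defines a morphism $\varphi\colon S\to Y$ onto a normal projective surface contracting $\Delta$ to a single point $y$, with $mL=\varphi^*M$ for an ample Cartier divisor $M$ on $Y$. Since $M$ is trivial on $\Spec\widehat{\mathcal O_{Y,y}}$, the theorem on formal functions forces $mL|_{n\Delta}\cong\mathcal O_{n\Delta}$ for every $n\geq 1$. Using $\mathcal I_\Delta^{n-1}/\mathcal I_\Delta^n\cong K_C^{\otimes(n-1)}$ together with $L|_\Delta=\mathcal O_\Delta$, the short exact sequences
\[
0\to K_C^{\otimes(n-1)}\to\mathcal O_S(L)|_{n\Delta}\to\mathcal O_S(L)|_{(n-1)\Delta}\to 0
\]
place the lifting obstruction in $H^1(C,K_C^{\otimes(n-1)})$, which vanishes for $n\geq 3$ by Serre duality. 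Thus triviality of every $L|_{n\Delta}$ reduces to triviality of $L|_{2\Delta}$, governed by a fixed class $\alpha\in H^1(C,K_C)\cong k$; since the additive group of $k$ is torsion-free in characteristic zero, $mL|_{2\Delta}$ is trivial for some $m\geq 1$ only if $\alpha=0$.

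The main obstacle is therefore to verify $\alpha\neq 0$. In local coordinates $(x_i,y_i)$ around points of $\Delta$ with $\Delta=\{y_i=x_i\}$, $\mathcal O_S(L)$ is generated by $dx_i/(y_i-x_i)$, viewed as a meromorphic section of $\pi_1^*\omega_C$ with simple pole along $\Delta$. Reducing the transition functions of this local frame modulo $(y_i-x_i)^2$ yields an explicit \v{C}ech cocycle, and a direct calculation identifies its class with a nonzero scalar multiple of the canonical generator of $H^1(C,K_C)$. This contradicts triviality of $mL|_{2\Delta}$ and completes the characteristic zero case. The same cocycle class is of course also nonzero in characteristic $p$, but there $p\alpha=0$, consistent with the semi-ampleness already established by Theorem \ref{main-thm}.
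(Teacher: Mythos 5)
Your argument is correct in substance, and it is essentially the intended one: the paper does not prove Proposition \ref{prop-keel} at all but simply cites Keel's 3.0 Theorem, whose proof runs along exactly the lines you propose --- positive characteristic by applying the Artin--Keel theorem (Theorem \ref{main-thm}) to $L$ after checking $L$ is nef and big, $\mathcal E(L)=\{\Delta\}$ and $L|_{\Delta}\cong\mathcal O_{\Delta}$, and characteristic zero by an obstruction on the first infinitesimal neighbourhood of the diagonal. Your numerical preliminaries ($\Delta^2=2-2g$, $L\cdot\Delta=0$, $L^2=2g-2$, nefness) are all right. The one place where you assert rather than prove something is the crux $\alpha\neq 0$; for the record the computation does close: with local frames $s_i=dx_i/(x_i-y_i)$ the transition functions of $L$ are $g_{ij}=f'_{ij}(x_j)(x_j-y_j)/\bigl(f_{ij}(x_j)-f_{ij}(y_j)\bigr)\equiv 1+\tfrac12\,(f''_{ij}/f'_{ij})(x_j-y_j)$ modulo $\mathcal I_{\Delta}^2$, so under $\mathcal I_{\Delta}/\mathcal I_{\Delta}^2\cong K_C$ the class $\alpha$ is represented by the cocycle $\tfrac12\, d\log f'_{ij}$, i.e. $\alpha=\tfrac12\,c_1(K_C)=(g-1)$ times the canonical generator of $H^1(C,K_C)\cong k$, which is nonzero because $g\geq 2$ and $\ch(k)=0$. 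Two minor points: the paragraph on obstructions in $H^1(C,K_C^{\otimes(n-1)})$ for $n\geq 3$ is superfluous (only the triviality of $mL|_{2\Delta}$ enters the contradiction), and your closing sentence is slightly off --- in characteristic $p$ the same class equals $(g-1)$ times the generator and can perfectly well vanish (e.g.\ when $p$ divides $g-1$; in characteristic $2$ the coefficient $\tfrac12$ must be interpreted as the divided second Taylor coefficient), though this is harmless since that case is already settled by Theorem \ref{main-thm}.
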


By Proposition \ref{prop-keel}, 
we obtain the following interesting example. 

\begin{ex}
Let $U$ be a nonempty Zariski open set of 
$\Spec \mathbb Z$ and let $X\to U$ be 
a smooth family of curves of genus $g\geq 2$. 
We set $Y=X\times _{U}X$. Let 
$\Delta$ be the image of the diagonal 
morphism $\Delta_{X/U}:X\to X\times _U X$. We set 
$M=p_1^*K_{X/U}+\Delta$ where 
$p_1:Y=X\times _U X \to X$ is the first projection. 
Let $p\in U$ be any closed point. 
Then $M_p=M|_{Y_p}$ is semi-ample for 
every $p\in U$, where 
$\pi:Y\to U$ is the 
natural map and $Y_p=\pi^{-1}(p)$. 
On the other hand, $M$ is not $\pi$-semi-ample. 
\end{ex}

\section{$\ch (k)=0$ vs.~$\ch (k)>0$}\label{sec4}

The following theorem is a special case of the abundance theorem for 
log surfaces. It is a key step toward showing the finite generation of log 
canonical rings (see Corollary 
\ref{cor13}). 

\begin{thm}\label{thm41}
Let $(X, \Delta)$ be a $\mathbb Q$-factorial projective 
log surface such that $\Delta$ is a $\mathbb Q$-divisor. 
Assume that $K_X+\Delta$ is nef and big. 
Then $K_X+\Delta$ is semi-ample. 
\end{thm}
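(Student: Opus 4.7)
The plan is to treat the two characteristic cases separately.

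In characteristic zero, I would appeal to Kawamata--Viehweg vanishing on a log resolution $f\colon Y\to X$ of $(X,\Delta)$ and run the classical Kawamata--Shokurov base point free argument. Even though $(X,\Delta)$ is not assumed to be log canonical under hypothesis (A) of Theorem~\ref{main-ka}, a small perturbation of the boundary on $Y$ places one in a klt setting to which the vanishing theorem applies; producing enough sections on $Y$ and then pushing them down should yield the semi-ampleness of $K_X+\Delta$ on $X$.

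In positive characteristic $p=\ch(k)>0$, the plan is to invoke the Artin--Keel contraction theorem (Theorem~\ref{main-thm}). First choose $m>0$ so that $L:=m(K_X+\Delta)$ is a Cartier divisor; then $L$ is still nef and big. By Theorem~\ref{main-thm}, it is enough to prove that $L|_{\mathbb E(L)}$ is semi-ample. By the Hodge index theorem, together with $L^{2}>0$ and $L\cdot C=0$ for every $C\in\mathcal E(L)$, the set $\mathcal E(L)$ is finite and its intersection matrix is negative definite; one must then show that $\mathcal O_{\mathbb E(L)}(L)$ is torsion in $\Pic(\mathbb E(L))$.

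For each $C\in\mathcal E(L)$, combining $(K_X+\Delta)\cdot C=0$ with $C^{2}<0$ and adjunction on a minimal resolution of $X$, together with the boundary hypothesis on $\Delta$, should give strong restrictions on the geometry of $C$: whenever $C\not\subseteq\Supp\llcorner\Delta\lrcorner$, the curve $C$ is forced to be rational, so $L|_C$ is automatically trivial. Components of $\mathbb E(L)$ contained in $\llcorner\Delta\lrcorner$ need a finer torsion argument using the precise structure of the pair. After replacing $L$ by a further multiple to trivialize $L|_C$ on each component and to glue these trivializations compatibly on $\mathbb E(L)$, Theorem~\ref{main-thm} delivers the semi-ampleness of $L$, hence of $K_X+\Delta$.

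The main obstacle I anticipate is the torsion step on higher-genus components of $\mathbb E(L)$ when $k$ is not algebraic over $\mathbb F_p$: over $k=\overline{\mathbb F}_p$ the whole argument collapses into Corollary~\ref{cor23}, but in general one needs either to rule out such components in $\mathcal E(L)$ using the boundary data, or to pin down their $L$-restriction as torsion from the numerical constraints imposed by $L=m(K_X+\Delta)$ together with the $\mathbb Q$-factoriality of $X$.
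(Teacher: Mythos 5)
Your positive-characteristic plan reaches for the right tool (Theorem~\ref{main-thm}) but stalls exactly where the real work lies, and the fix you gesture at is not the one that works. The non-rational components cannot be ``ruled out'': components of $\llcorner\Delta\lrcorner$ of arithmetic genus one genuinely occur in $\mathcal E(L)$, and over $k\ne\overline{\mathbb F}_p$ there is no general torsion principle to appeal to. The paper's mechanism is different and is the missing idea: before invoking Theorem~\ref{main-thm} for $K_X+\Delta$ itself, contract, one curve at a time, every $C\in\mathcal E(K_X+\Delta)$ with $(K_X+C)\cdot C<0$ --- such a $C$ is $\mathbb P^1$ by adjunction and is contracted to a $\mathbb Q$-factorial point by Corollary~\ref{cor22}, and the process terminates since the Picard number drops. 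After this reduction every remaining $C\in\mathcal E(K_X+\Delta)$ is a component of $\llcorner\Delta\lrcorner$ with $C\cap\Supp(\Delta-C)=\emptyset$; in particular these curves are pairwise disjoint, and adjunction forces $p_a(C)\le 1$ with $(K_X+\Delta)|_C$ isomorphic to $\omega_C\cong\mathcal O_C$ when $C\not\simeq\mathbb P^1$. So the restriction is actually trivial by adjunction, with no dependence on the ground field, rather than ``torsion pinned down by numerical constraints.'' Skipping the preliminary contraction also leaves a second hole that you wave away as ``gluing'': $\mathbb E(L)$ can contain cycles of rational curves whose intersection matrix is negative definite (cusp-type configurations), and the Picard group of such a reduced cycle has a $\mathbb G_m$-factor, so triviality of $L$ on each component does not make $L|_{\mathbb E(L)}$ torsion when $k\ne\overline{\mathbb F}_p$. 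The paper's contraction-first reduction removes both problems at once.

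The characteristic-zero half has a gap as well: the claim that ``a small perturbation of the boundary on $Y$ places one in a klt setting'' fails, because under hypothesis (A) the pair $(X,\Delta)$ need not be log canonical. If, say, $X$ has a single non-lc singular point and $\Delta=0$, the crepant pullback of $K_X+\Delta$ to a resolution has an exceptional coefficient strictly greater than one, and no small perturbation of the boundary can remove it while keeping the divisor equal (even numerically over $X$) to $f^*(K_X+\Delta)$; consequently Kawamata--Viehweg vanishing plus the standard Kawamata--Shokurov base point free argument does not apply as stated. This is precisely why the characteristic-zero proof in \cite{fujino} is singled out in Section~\ref{sec4} as one of the hardest parts of that paper: it requires a vanishing theorem beyond Kawamata--Viehweg, coming from the mixed Hodge theory of compactly supported cohomology, to deal with the non-lc locus, and occupies all of Section~4 of \cite{fujino}. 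Your outline, as written, would only prove the theorem for pairs that can be perturbed to klt, which is strictly weaker than the statement.
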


When $\ch (k)=0$, the proof of Theorem \ref{thm41} heavily depends on 
a Kodaira type vanishing theorem 
and it is one of the hardest parts of \cite{fujino}. 
Section 4 of \cite{fujino} is devoted to the proof of Theorem \ref{thm41}. 
On the other hand, when $\ch (k)>0$, 
the proof of Theorem \ref{thm41} is much simpler by Theorem \ref{main-thm}. 
Therefore, in some sense, the minimal model theory of log surfaces is easier to 
treat in positive characteristic. 
In $\ch (k)=0$, it follows from the mixed Hodge theory of compact support 
cohomology groups. 
In $\ch (k)>0$, it uses Frobenius maps (see the proof of Theorem \ref{main-thm}). 
 
\begin{proof}[{Sketch of the proof {\em{($\ch (k)>0$)}}}]
First, we set  
$$
\mathcal E(K_X+\Delta):=\{C\, |\, C\ \text{is a curve on $X$ and}\ C\cdot (K_X+\Delta)=0\}. $$ 
Then $\mathcal E(K_X+\Delta)$ consists of finitely many irreducible curves on $X$ 
by the Hodge index theorem. 
We take an irreducible curve $C\in \mathcal E(K_X+\Delta)$. 
Then $C^2<0$ by the Hodge index theorem. 
If $(K_X+C)\cdot C<0$, 
then $C\simeq \mathbb P^1$ by adjunction and we can contract 
$C$ to a point by Corollary \ref{cor22}. 
Therefore, we may assume that 
$C$ is an irreducible component of $\llcorner \Delta\lrcorner$, 
$C\cap \Supp (\Delta-C)
=\emptyset$, and 
$(K_X+\Delta)\cdot C=0$  for every $C\in \mathcal E(K_X+\Delta)$. 
If $C\simeq \mathbb P^1$ for $C\in \mathcal E(K_X+\Delta)$, 
then it is obvious that $(K_X+\Delta)|_C$ is semi-ample. 
If $C\not\simeq \mathbb P^1$ for 
$C\in \mathcal E(K_X+\Delta)$, then 
we can also check that 
$(K_X+\Delta)|_C$ is semi-ample by adjunction. 
Therefore, by Theorem \ref{main-thm}, we obtain 
that $K_X+\Delta$ is semi-ample. 
\end{proof}

For the details of Theorem \ref{thm41}, see \cite[Section 4]{fujino} and 
\cite[Section 5]{tanaka}. 

\begin{proof}[Sketch of the proof of {\em{Corollary \ref{cor13}} ($\ch (k)>0$)}] 
If $\kappa (X, K_X+\Delta)\leq 1$, then it is obvious that 
$R(X, \Delta)$ is a finitely generated $k$-algebra. 
So, we assume that 
$K_X+\Delta$ is big. 
If $K_X+\Delta$ is not nef, then 
we can find a curve $C$ on $X$ such that $(K_X+\Delta)\cdot C<0$ and $C^2<0$. 
Therefore, 
$(K_X+C)\cdot C<0$. 
By adjunction, $C\simeq \mathbb P^1$. 
By Corollary \ref{cor22}, 
we can contract $C$. 
After finitely many steps, we may assume that $K_X+\Delta$ is nef. 
By Theorem \ref{thm41}, 
$K_X+\Delta$ is semi-ample. 
Thus, $R(X, \Delta)$ is a finitely generated $k$-algebra. 
\end{proof}

\section{$k\ne \overline {\mathbb F}_p$ vs.~$k=\overline {\mathbb F}_p$}
\label{sec4new}

First, we note the following important result. 

\begin{thm}[{see, for example, \cite[Theorem 10.1]{tanaka}}]\label{thm-qfac} 
Let $X$ be a normal surface defined over $\overline {\mathbb F}_p$. 
Then $X$ is $\mathbb Q$-factorial. 
\end{thm}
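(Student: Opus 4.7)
The plan is to work locally near a point $x \in X$ and, for any Weil divisor $D$ on a neighborhood of $x$, exhibit a positive integer $N$ so that $ND$ becomes Cartier near $x$. Fix a projective resolution $f\colon Y\to X$ (existence is classical for normal surfaces in positive characteristic, by Lipman) with exceptional prime divisors $E_1,\dots,E_n$ over $x$. By Mumford's theorem on surface singularities the intersection matrix $(E_i\cdot E_j)$ is negative definite, so there exist unique rational numbers $a_i$ with $(\tilde D+\sum a_i E_i)\cdot E_j=0$ for all $j$, where $\tilde D$ is the strict transform of $D$. Clearing denominators, one gets an integral Cartier divisor $M=m\tilde D+\sum b_i E_i$ on $Y$ with $M\cdot E_j=0$ for every $j$; put $L=\mathcal{O}_Y(M)$.

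Now I would follow the formal-neighborhood strategy already used in Proof $1$ of Theorem \ref{main-thm}. By Artin's lemma (\cite[Lemma (2.10)]{artin}) there is an effective Cartier divisor $F$ on $Y$ with $\Supp F=\bigcup E_i$ such that for every effective Cartier divisor $D'\geq F$ with the same support the restriction $\Pic(D')\to\Pic(F)$ is an isomorphism. Since $M\cdot E_j=0$ for all $j$, the class of $L|_F$ lies in the identity component $\Pic^0(F)$. Here is the one step that uses $k=\overline{\mathbb{F}}_p$: $\Pic^0(F)$ is a commutative algebraic group whose identity component is, by Chevalley's theorem, an extension of an abelian variety by a commutative affine group, hence by a torus and a unipotent group; each of these three constituents has torsion $\overline{\mathbb{F}}_p$-points (for $\mathbb{G}_a$ because $\overline{\mathbb{F}}_p$ is a torsion abelian group in characteristic $p$). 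Therefore $L|_F$ is torsion, so some $L^{\otimes N}|_F\simeq \mathcal{O}_F$, and Artin's lemma upgrades this to $L^{\otimes N}|_{D'}\simeq \mathcal{O}_{D'}$ for every admissible $D'\geq F$.

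The theorem of formal functions then yields $(f_*L^{\otimes N})_x^{\wedge}\simeq \widehat{\mathcal{O}}_{X,x}$, so $f_*L^{\otimes N}$ is an invertible sheaf in a neighborhood of $x$. The adjunction map $f^*f_*L^{\otimes N}\to L^{\otimes N}$ is an isomorphism off the exceptional locus, and any discrepancy along the $E_i$ would produce an effective exceptional divisor of intersection number zero against every $E_j$, which by negative definiteness must vanish. Thus $L^{\otimes N}\simeq f^*B$ for some line bundle $B$ near $x$. Pushing forward, $NmD=f_*(NM)=f_*f^*B=B$ as Weil divisors, so $NmD$ is Cartier near $x$, as required.

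The main obstacle is the structural input about $\Pic^0(F)$ for the (in general nonreduced) one-dimensional proper scheme $F$: one needs both Artin's formal lemma and enough of the theory of Picard schemes of nonreduced schemes to apply the Chevalley--Rosenlicht decomposition. Modulo these, the proof reduces to the elementary fact that every commutative algebraic group over $\overline{\mathbb{F}}_p$ has only torsion rational points, exactly the input already used in the passage from Corollary \ref{cor22} to Corollary \ref{cor23}.
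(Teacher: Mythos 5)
The paper does not actually prove Theorem \ref{thm-qfac}; it is quoted from \cite[Theorem 10.1]{tanaka}, so there is no in-paper argument to compare against. Your proposal is, in substance, the standard proof of this fact, and it correctly reuses the machinery of Proof 1 of Theorem \ref{main-thm}: numerical pullback of $D$ via negative definiteness of the exceptional intersection matrix, Artin's lemma on stabilization of $\Pic$ of infinitesimal neighborhoods, torsionness of Picard groups over $\overline{\mathbb F}_p$ (the same input used for Corollary \ref{cor23}), and the theorem on formal functions plus a negativity argument to descend a trivialized multiple to a Cartier divisor near $x$; taking a common multiple over the finitely many singular points finishes. I see no essential gap. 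Two points deserve slightly more care than you give them. First, a numerically trivial line bundle on the possibly non-reduced curve $F$ a priori lies in $\Pic^{\tau}(F)$ rather than in $\Pic^0(F)$; this is harmless since $\Pic^{\tau}/\Pic^0$ is finite, so torsionness of the $\overline{\mathbb F}_p$-points persists. Second, you can avoid invoking representability of the Picard functor of the non-reduced scheme $F$ (Murre--Oort) altogether: either argue by d\'evissage, noting that the kernel of $\Pic(F)\to \Pic(F_{\mathrm{red}})$ is a successive extension of quotients of coherent cohomology groups and hence killed by a power of $p$, while numerically trivial bundles on $F_{\mathrm{red}}$ are torsion by pulling back to the normalization and using that abelian varieties, tori and $\mathbb G_a$ have torsion $\overline{\mathbb F}_p$-points; or use the Frobenius trick of Proof 1, which only needs triviality of a multiple on the reduced exceptional curve and then produces triviality on $q$-th thickenings for $q=p^r$. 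With either repair the argument is complete and matches the proof given in \cite{tanaka}.
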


One of the key results for the minimal model theory of $\mathbb Q$-factorial 
log surfaces is as follows. It plays crucial roles in the 
proof of the non-vanishing theorem 
and the abundance theorem for $\mathbb Q$-factorial log surfaces. 
For details, see \cite{fujino} and \cite{tanaka}. 

\begin{thm}[{cf.~\cite[Lemma 5.2]{fujino} and \cite[Theorem 4.1]{tanaka}}]\label{thm31}
Assume that $k\ne \overline{\mathbb F}_p$. 
Let $X$ be a $\mathbb Q$-factorial projective 
surface and let $f:Y\to X$ be a projective 
birational 
morphism from a smooth projective surface $Y$. Let $p:Y\to C$ be 
a projective surjective morphism onto a projective smooth 
curve $C$ with the genus $g(C)\geq 1$. 
Then every $f$-exceptional curve $E$ on $Y$ is contained in a fiber of 
$p:Y\to C$. 
\end{thm}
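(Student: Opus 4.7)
The plan is to argue by contradiction. Suppose some $f$-exceptional curve $E$ has $h := p|_E \colon E \to C$ surjective, hence finite of some degree $d \geq 1$. I will show that this forces every class in $\Pic^{0}(C)(k)$ to be torsion, which contradicts the hypothesis $k \neq \overline{\mathbb F}_p$ (for such $k$ the positive-dimensional abelian variety $\Pic^{0}(C)$ always has $k$-points of infinite order). First I would record a uniform numerical input: writing $\Exc(f) = E \cup E_2 \cup \cdots \cup E_n$, the intersection number $F_c \cdot E_j = \deg(p|_{E_j})$ for the fiber $F_c := p^{*}c$ is independent of the closed point $c \in C$, since fibers of $p$ are algebraically equivalent on $Y$. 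Consequently, for any two points $c, c' \in C$, the divisor $F_c - F_{c'} = p^{*}(c - c')$ has zero intersection with every $f$-exceptional curve.

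Next I would exploit $\mathbb Q$-factoriality. Since $X$ is $\mathbb Q$-factorial, the Weil divisor $D := f_{*}(F_c - F_{c'})$ is $\mathbb Q$-Cartier, and the pullback $f^{*}D$ is characterized among divisors with push-forward $D$ by having zero intersection with every $f$-exceptional curve (the intersection matrix on $\Exc(f)$ being negative-definite). By the previous paragraph $F_c - F_{c'}$ already has this vanishing property, so $f^{*}D = F_c - F_{c'}$. Choose $m > 0$ with $mD$ Cartier on $X$. Since $f(E)$ is a single point, $\mathcal O_X(mD)$ restricts trivially there, so $\mathcal O_Y(mf^{*}D)|_E = (f|_E)^{*}\bigl(\mathcal O_X(mD)|_{f(E)}\bigr)$ is trivial in $\Pic(E)$. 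Hence $m \cdot h^{*}(c - c') = m(F_c - F_{c'})|_E = 0$ in $\Pic(E)$, i.e., $h^{*}(c - c')$ is a torsion element.

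Finally I would normalize and apply a norm to descend the torsion statement to $C$. Let $\nu \colon \widetilde E \to E$ be the normalization and $\widetilde h := h \circ \nu \colon \widetilde E \to C$, a finite surjective morphism of smooth projective curves of degree $d$. Pulling back the torsion relation under $\nu^{*}$ and then using the identity $\operatorname{Nm} \circ \widetilde h^{*} = [d]$ for the norm map $\operatorname{Nm} \colon \Pic^{0}(\widetilde E) \to \Pic^{0}(C)$, I obtain that $d(c - c')$, and hence $c - c'$ itself, is torsion in $\Pic^{0}(C)(k)$. Since every class in $\Pic^{0}(C)(k)$ is a $\mathbb Z$-combination of such differences, the entire group $\Pic^{0}(C)(k)$ is torsion, giving the desired contradiction. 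The step I expect to be the main obstacle in a clean write-up is the identification $f^{*}D = F_c - F_{c'}$: this is precisely where the $\mathbb Q$-factoriality of $X$ is indispensable, for without it the Weil divisor $D$ need not be $\mathbb Q$-Cartier near $f(E)$ at all, and there is no canonical pullback to compare with $F_c - F_{c'}$.
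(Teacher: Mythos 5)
Your proof is correct, and while it shares the paper's overall strategy (argue by contradiction from $p(E)=C$, use $\mathbb Q$-factoriality to control restrictions of $p^*\Pic^0(C)$ to $E$, and contradict the arithmetic of $\Pic^0(C)(k)$ for $k\ne\overline{\mathbb F}_p$), the execution is genuinely different at the key step. The paper does not use your observation that $(F_c-F_{c'})\cdot E_j=0$ for every $f$-exceptional curve; instead, for a degree-zero class $L$ on $C$ it only records that $f^*f_*(p^*L)$ differs from $p^*L$ by an exceptional $\mathbb Q$-divisor, so that $\pi^*L|_E$ lands in $G\otimes_{\mathbb Z}\mathbb Q$, where $G\subset\Pic(E)(k)$ is the finitely generated subgroup spanned by the $\mathcal O_E(E_i)$; the contradiction is then a rank count, and it requires the full strength of the Frey--Jarden theorem that $(\pi^*\Pic^0(C))(k)$ has infinite rank (after first blowing up to make $E$ smooth). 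Your numerical-equivalence-of-fibers remark makes the exceptional correction vanish identically, so $f^*D=F_c-F_{c'}$ exactly and the restriction to $E$ is honestly torsion; consequently you only need the weaker arithmetic input that $\Pic^0(C)(k)$ is not a torsion group when $k\ne\overline{\mathbb F}_p$, and you replace the paper's preliminary blow-ups by passing to the normalization $\widetilde E$ and using $\operatorname{Nm}\circ\widetilde h^*=[d]$. What your route buys is a cleaner statement ($\pi^*(c-c')$ torsion, not just of bounded rank modulo $G$) and a weaker arithmetic hypothesis; what the paper's route buys is robustness, since it never needs the fiber-equivalence observation and applies verbatim to arbitrary classes in $\Pic^0(C)$. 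Two small points: your parenthetical $F_c\cdot E_j=\deg(p|_{E_j})$ holds only for horizontal $E_j$ (it is $0$ for vertical ones), though the conclusion $(F_c-F_{c'})\cdot E_j=0$ is unaffected; and the final fact that a positive-dimensional abelian variety over an algebraically closed field $k\ne\overline{\mathbb F}_p$ has a non-torsion $k$-point is exactly the Frey--Jarden input the paper cites, so you should cite it rather than treat it as obvious.
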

\begin{proof}[Sketch of the proof]
By taking suitable blow-ups, we may assume that $E$ is smooth. 
Let $\{E_i\}_{i\in I}$ be the set of all $f$-exceptional 
divisors. 
Suppose that $p(E)=C$. 
We consider the 
subgroup $G$ of $\Pic (E)(k)$ generated by 
$\{\mathcal O_E(E_i)\}_{i\in I}$. 
Since $k\ne \overline {\mathbb F}_p$, 
$(\pi^*\Pic ^0(C))(k)\otimes _{\mathbb Z}\mathbb Q\setminus G\otimes 
_{\mathbb Z}\mathbb Q$ is not 
empty where $\pi=p|_{E}:E\to C$. 
Here, we used the fact that 
the rank of $(\pi^*\Pic ^0(C))(k)$ is infinite 
since $k\ne \overline {\mathbb F}_p$ 
(see \cite[Theorem 10.1]{fj}). 
On the other hand, 
$(\pi^*\Pic^0(C))(k)\otimes_{\mathbb Z}\mathbb Q\subset 
G\otimes _{\mathbb Z}\mathbb Q$ since 
$X$ is $\mathbb Q$-factorial. 
It is a contradiction. 
Therefore, $E$ is in a fiber of $p:Y\to C$. 
\end{proof}

Theorem \ref{thm31} does not hold when $k=\overline {\mathbb F}_p$. 

\begin{ex}
We consider $C=(x^3+y^3+z^3=0)\subset \mathbb P^2=H$, which 
is a hyperplane in $\mathbb P^3$, 
over $k=\overline {\mathbb F}_p$ with $p\ne 3$. 
Let $X$ be the cone over $C$ in $\mathbb P^3$ with the vertex $P$. 
Let $Z\to \mathbb P^3$ be the blow-up at $P$ and let $Y$ be the strict transform of $X$. 
Then $Y$ is a $\mathbb P^1$-bundle over $C$, the singularity of $X$ is not rational, 
$X$ is $\mathbb Q$-factorial (see Theorem \ref{thm-qfac}), and $f:Y\to X$ 
contracts a section of $p:Y\to C$. 
\end{ex}

If $k=\overline {\mathbb F}_p$, then we can easily obtain the 
finite generation of sectional rings. 

\begin{thm}
Assume that $k=\overline {\mathbb F}_p$. Let $X$ be a projective 
surface and let $D$ be a Weil divisor on $X$. 
Then the sectional ring 
$$
R(D)=\bigoplus _{m\geq 0}H^0(X, \mathcal O_X(mD))
$$ 
is a finitely generated $k$-algebra. 
\end{thm}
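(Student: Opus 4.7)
My plan is as follows. First, by Theorem \ref{thm-qfac}, the normal projective surface $X$ is automatically $\mathbb Q$-factorial, so some positive multiple $nD$ is Cartier; since every graded piece of $R(D)$ is finite-dimensional over $k$, finite generation of $R(D)$ is equivalent to that of its Veronese subalgebra $R(nD)$, so I would assume $D$ itself is Cartier. Taking a resolution $f\colon Y\to X$, the projection formula together with $f_*\mathcal O_Y=\mathcal O_X$ (normality of $X$) will identify $R(X,D)$ with $R(Y, f^*D)$, reducing the problem to a Cartier divisor $D'$ on a smooth projective surface $Y$ over $\overline{\mathbb F}_p$. If $D'$ is not pseudo-effective, then $H^0(Y, mD')=0$ for $m\gg 0$ and there is nothing to prove, so I would assume $D'$ is pseudo-effective.

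Next, I would take the Zariski decomposition $D'=P+N$, with $P$ a nef $\mathbb Q$-divisor and $N$ an effective $\mathbb Q$-divisor whose support has negative-definite intersection matrix. For every $m$ making $mN$ integral, the defining property of the decomposition forces every effective divisor linearly equivalent to $mD'$ to contain $mN$, so multiplication by the tautological section of $\mathcal O_Y(mN)$ yields an isomorphism $H^0(Y, mP)\simeq H^0(Y, mD')$; these isomorphisms are compatible with multiplication, so an appropriate Veronese of $R(Y,D')$ is identified with $R(Y, \ell P)$ for $\ell$ chosen so that $\ell P$ is Cartier. It thus suffices to show that $R(Y, \ell P)$ is finitely generated for the nef Cartier divisor $\ell P$. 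If $(\ell P)^2>0$, Corollary \ref{cor23} gives directly that $\ell P$ is semi-ample, so $R(Y, \ell P)$ is finitely generated as the coordinate ring of the image under the associated morphism. If $\ell P\equiv 0$, then $\Pic^0$ being torsion over $\overline{\mathbb F}_p$ forces some further multiple of $\ell P$ to be linearly trivial, and again $R(Y,\ell P)$ is finitely generated. In the remaining case $(\ell P)^2=0$ with $\ell P\not\equiv 0$, I would argue that $\kappa(\ell P)\in\{0,1\}$: the case $\kappa=0$ is immediate by a finitely-generated-subsemigroup argument, and for $\kappa=1$ the Iitaka fibration yields $\ell P\equiv g^*A$ for some divisor $A$ on a smooth curve $C$, so that torsion of $\Pic^0$ promotes this numerical equivalence to a linear equivalence after passing to a further multiple, reducing finite generation to the trivial case of a sectional ring on the curve $C$.

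The main obstacle will be the bookkeeping of $\mathbb Q$-coefficients appearing in the Zariski decomposition, and in particular the nef, non-big subcase $(\ell P)^2=0$ with $\kappa=1$: one needs both a structural identification of a nef Kodaira dimension one divisor with a numerical pullback from a curve, and the characteristic-$p$ torsion of $\Pic^0$ to upgrade numerical equivalence to linear equivalence. Once these reductions are in place, the essential content will be Corollary \ref{cor23}, so that this theorem appears as a direct consequence of the Artin--Keel contraction theorem packaged together with the torsion of $\Pic^0$ over $\overline{\mathbb F}_p$.
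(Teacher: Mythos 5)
Your proposal is correct in outline, but it follows a genuinely different route from the paper. The paper stays on $X$ itself: since $X$ is $\mathbb Q$-factorial over $\overline{\mathbb F}_p$ (Theorem \ref{thm-qfac}), it treats the case $\kappa(X,D)\leq 1$ as standard, and when $D$ is big it contracts the finitely many curves $C$ with $D\cdot C<0$ (such contractions exist over $\overline{\mathbb F}_p$ by the Artin--Keel theorem, because the relevant restricted line bundle has degree zero and is therefore torsion, hence semi-ample); after finitely many steps $D$ is nef and big and Corollary \ref{cor23} gives semi-ampleness, hence finite generation. You instead pass to a Cartier multiple, pull back to a resolution, take the Zariski decomposition $D'=P+N$, and analyze the nef positive part case by case, using Corollary \ref{cor23} only when $P^2>0$ and the torsion of $\Pic$ over $\overline{\mathbb F}_p$ (note you need torsion of $\Pic^{\tau}$, i.e.\ also finiteness of the torsion of the N\'eron--Severi group, not just of $\Pic^0$) in the remaining cases. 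What your approach buys is that no contraction morphisms are needed at all and the low Kodaira dimension cases, which the paper's sketch waves through, are made explicit; the price is the $\mathbb Q$-coefficient bookkeeping and the structural step you correctly flag in the case $P^2=0$, $\kappa=1$: there the moving part of a large multiple is base point free (two distinct members through a base point would force $M^2>0$), the Stein factorization gives a fibration $g\colon Y\to C$ with $M\sim g^*A$, and Zariski's lemma on fibers shows the fixed part is numerically a rational multiple of the fiber class, so a multiple of $P$ is numerically, and then (over $\overline{\mathbb F}_p$) linearly, a pullback from $C$. Two cosmetic points: if $D'$ is not pseudo-effective then in fact $H^0(Y,mD')=0$ for \emph{all} $m>0$, which is why that case is trivial; and you omit $\kappa(P)=-\infty$ in the last case, which is equally trivial. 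Also, the Veronese reduction is standard but is justified by integrality of $R(D)$ over $R(nD)$ together with finiteness of integral closure, not merely by finite-dimensionality of the graded pieces.
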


\begin{proof}[Sketch of the proof] 
As in the proof of Corollary \ref{cor13}, 
we may assume that $D$ is big. 
By contracting curves $C$ with $D\cdot C<0$, 
we may further assume that $D$ is nef and big. 
Then $D$ is semi-ample by Corollary \ref{cor23}. 
Therefore, $R(D)$ is finitely generated. 
\end{proof}

The geometry over $\overline {\mathbb F}_p$ seems to be 
completely different from 
that over $k \ne \overline {\mathbb F}_p$. 
The minimal model theory for log surfaces 
over $k=\overline {\mathbb F}_p$ is discussed in \cite[Part 2]{tanaka}, 
which has a slightly different flavor from that over $k\ne \overline {\mathbb F}_p$. 
 
\section{Indecomposable curves of canonical type}\label{sec-5}

In this section, we discuss a key result for the proof of the abundance theorem 
for $\kappa =0$:~Theorem \ref{thm0}. Note that the abundance theorem for $\kappa =1$ is easy to 
prove and the abundance theorem for $\kappa =2$ has already been treated in Theorem \ref{thm41}. 

\begin{thm}[{cf.~\cite[Theorem 6.2]{fujino} and \cite[Theorem 7.5]{tanaka}}]\label{thm0}
Let $(X, \Delta)$ be a $\mathbb Q$-factorial 
projective log surface such that $\Delta$ is a $\mathbb Q$-divisor. 
Assume that $K_X+\Delta$ is nef and $\kappa (X, K_X+\Delta)=0$. Then 
$K_X+\Delta\sim _{\mathbb Q}0$. 
\end{thm}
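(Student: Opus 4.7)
The plan is to start by exploiting $\kappa(X, K_X+\Delta) = 0$ to extract an effective $\mathbb{Q}$-divisor $E$ with $E \sim_{\mathbb{Q}} K_X+\Delta$, and to reduce the theorem to showing $E = 0$. This reduction is valid because a nonzero effective $\mathbb{Q}$-divisor on a proper variety cannot be $\mathbb{Q}$-linearly trivial: if $mE\sim 0$ with $mE$ nonzero effective, the corresponding rational function would be a nonconstant global regular function on $X$. Since $E$ is nef and $\kappa<2$ rules out bigness, we have $E^2 = 0$; writing $E = \sum a_i C_i$ with $a_i > 0$, the identity $\sum_i a_i (E \cdot C_i) = 0$ is a sum of nonnegative terms, forcing $E \cdot C_i = 0$ for every component $C_i$ of $E$.

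Next I would pass to the minimal resolution $f \colon Y \to X$, define $\Delta_Y$ by $K_Y + \Delta_Y = f^*(K_X+\Delta)$, and clear denominators to arrive at an integral effective Cartier divisor $D := f^*(mE)$ on the smooth projective surface $Y$ for some $m > 0$. On $Y$ the divisor $D$ is still nef with $D^2 = 0$ and $D \cdot C = 0$ for every component $C \subseteq \Supp D$, while $(K_Y+\Delta_Y) \cdot C = 0$ pins down $K_Y \cdot C$ in terms of the coefficients of $\Delta_Y$. Each connected piece of $\Supp D$ is then, up to the contribution of $\Delta_Y$, an indecomposable curve of canonical type in the sense studied in Section~\ref{sec-5}.

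The heart of the argument is to show that for such a $D$ some positive multiple has $\mathcal{O}_Y(ND)|_D \cong \mathcal{O}_D$, and then to bootstrap this restricted triviality to a genuine linear equivalence $ND \sim 0$ on $Y$. For the bootstrap, the nefness of $D$ together with $\kappa(Y,D)=0$ makes $h^0(\mathcal{O}_Y(ND))=1$ for all large $N$, and an exact-sequence argument (in the spirit of the end of Proof~2 of Theorem~\ref{main-thm}, using Fujita-type vanishing) propagates the trivialization from $D$ to $Y$. Once $ND\sim 0$ is in hand, the proper-variety argument above forces $D = 0$, and pulling back through $f$ gives $E = 0$ on $X$, which is the desired conclusion.

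The chief obstacle is the first half: the torsion property of the class of $\mathcal{O}_Y(D)|_D$ in $\Pic^0(D)$. As signalled in the introduction, its proof takes sharply different forms in the three cases. In $\ch(k)=0$ one invokes mixed Hodge theory on the compact-support cohomology of $D$ to force integrality, hence torsion, of the relevant class. In $\ch(k)>0$ with $k \ne \overline{\mathbb{F}}_p$ one combines a Frobenius pullback, in the spirit of the proof of Theorem~\ref{main-thm}, with the infinite-rank input of Theorem~\ref{thm31} to rule out non-torsion contributions coming from the Picard variety of the base. In $k = \overline{\mathbb{F}}_p$ the statement is automatic, since $\Pic^0(V)(\overline{\mathbb{F}}_p)$ is a torsion group for any projective $k$-variety $V$.
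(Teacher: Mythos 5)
Your opening reductions are fine ($E\sim_{\mathbb Q}K_X+\Delta$ effective, $E^2=0$, $E\cdot C_i=0$, pass to the minimal resolution), but the two steps that carry the real weight are not established. First, the phrase ``up to the contribution of $\Delta_Y$'' hides a genuine part of the proof: an indecomposable curve of canonical type requires $K_Y\cdot C_i=D\cdot C_i=0$ exactly, while your configuration only gives $(K_Y+\Delta_Y)\cdot C_i=0$, so any component with $\Delta_Y\cdot C_i>0$ has $K_Y\cdot C_i<0$ and must be disposed of by adjunction and contraction (as in the sketch of Theorem \ref{thm41}, using Corollary \ref{cor22} in positive characteristic), components of $\llcorner\Delta\lrcorner$ meeting $\Supp D$ must be handled separately, and one still has to pass to a connected component and divide by the gcd of the multiplicities before the notion applies. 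This case analysis is a substantial portion of the actual proofs and cannot be waved through.

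Second, and more seriously, the endgame is not the paper's mechanism and is unjustified as stated. The bootstrap from $\mathcal O_D(ND)\simeq\mathcal O_D$ to $ND\sim 0$ ``in the spirit of Proof 2 of Theorem \ref{main-thm}'' does not transfer: there the divisor was nef and \emph{big} and the curves to be contracted had negative definite intersection matrix, so $mH-D$ was ample and Fujita vanishing applied; here $D$ itself is the nef divisor with $D^2=0$, nothing is big, and lifting the trivializing section through $0\to\mathcal O_Y((N-1)D)\to\mathcal O_Y(ND)\to\mathcal O_D(ND)\to 0$ meets an $H^1$-obstruction for which you have no vanishing --- that obstruction is precisely the delicate point of the whole theorem. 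The route indicated by the paper is different: once a connected canonical-type piece $Y_0$ has torsion normal bundle, Proposition \ref{prop1} (Masek, Totaro) says $Y_0$ is semi-ample with $\kappa(Y,Y_0)=1$, and since a positive multiple of $Y_0$ sits below $D$ this contradicts $\kappa=0$; one never proves $ND\sim 0$ directly. Moreover your trichotomy is misplaced: the torsion of $\mathcal O_{Y_0}(Y_0)$ is not the characteristic-sensitive ingredient (once $\Delta_Y$ is off $Y_0$ it follows from adjunction and $\omega_{Y_0}\simeq\mathcal O_{Y_0}$ together with $D\sim_{\mathbb Q}m(K_Y+\Delta_Y)$, and over $\overline{\mathbb F}_p$ it is automatic); what varies with the field is the passage from torsion to the moving/fibration conclusion, where $H^1(X,\mathcal O_X)=0$ is needed in characteristic zero but not in positive characteristic. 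The paper invokes the mixed Hodge theory of compact support cohomology for Theorem \ref{thm41} (the nef and big case), and Theorem \ref{thm31} as the key to non-vanishing and abundance for $k\ne\overline{\mathbb F}_p$, not as tools for proving the torsion statement, so your assignment of those inputs does not reflect how the argument actually runs.
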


Let us recall the definition 
of {\em{indecomposable curves of canonical type}} in the 
sense of Mumford. 

\begin{defn}[Indecomposable curves of canonical type]
Let $X$ be a smooth 
projective surface and let $Y$ be an effective 
divisor on $X$. 
Let $Y=\sum _{i=1}^k n_i Y_i$ be the 
prime decomposition. 
We say that $Y$ is an 
{\em{indecomposable curve of canonical type}} if 
$K_X\cdot Y_i=Y\cdot Y_i=0$ for 
every $i$, $\Supp Y$ is connected, 
and the greatest common divisor of integers $n_1, \cdots, n_k$ is 
equal to one. 
\end{defn}

The following proposition is a key result. 
For a proof, see, for example, \cite[Lemma]{masek} and the proof of 
\cite[Theorem 2.1]{totaro}. See also \cite[Proposition 7.3]{tanaka}.  

\begin{prop}\label{prop1} 
Let $X$ be a smooth projective surface over $k$ and let $Y$ be 
an indecomposable curve of canonical type. 
If $\mathcal O_Y(Y)$ is torsion and $H^1(X, \mathcal O_X)=0$, then 
$Y$ is semi-ample and $\kappa (X, Y)=1$. 
If $\mathcal O_Y(Y)$ is torsion and 
$\ch (k)>0$, 
then $Y$ is always semi-ample and $\kappa (X, Y)=1$ without assuming 
$H^1(X, \mathcal O_X)=0$. 
Therefore, if $k=\overline {\mathbb F}_p$, then 
$Y$ is semi-ample and $\kappa(X, Y)=1$ since $\mathcal O_Y(Y)$ is always torsion. 
\end{prop}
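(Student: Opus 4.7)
The plan is to reduce the statement to lifting a nowhere-vanishing section of $\mathcal O_Y(mY)\cong \mathcal O_Y$ to a section of $\mathcal O_X(mY)$, where $m$ is the order of $\mathcal O_Y(Y)$ in $\Pic^0(Y)$. The essential input about $Y$ itself is Mumford's lemma for indecomposable curves of canonical type: from $Y\cdot(K_X+Y)=0$ one obtains $p_a(Y)=1$, $\chi(\mathcal O_Y)=0$, $\omega_Y\cong\mathcal O_Y$, and $H^0(\mathcal O_Y)=k$; moreover, any line bundle $L$ on $Y$ of degree $0$ on every component $Y_i$ satisfies $h^0(L)=1$ if $L\cong\mathcal O_Y$ and $h^0(L)=0$ otherwise.

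In the case $H^1(X,\mathcal O_X)=0$, for $1\le i\le m-1$ the sheaf $\mathcal O_Y(iY)$ has degree $0$ on each $Y_j$ but is not isomorphic to $\mathcal O_Y$, so $h^0(\mathcal O_Y(iY))=0$ and, by Serre duality on $Y$ using $\omega_Y\cong\mathcal O_Y$, also $h^1(\mathcal O_Y(iY))=h^0(\mathcal O_Y(-iY))=0$. The long exact sequence of
$$
0\to \mathcal O_X((i-1)Y)\to \mathcal O_X(iY)\to \mathcal O_Y(iY)\to 0
$$
then gives $H^1(\mathcal O_X((i-1)Y))\cong H^1(\mathcal O_X(iY))$ for $i=1,\dots,m-1$, and chaining yields $H^1(\mathcal O_X((m-1)Y))\cong H^1(\mathcal O_X)=0$. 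Taking $i=m$, the connecting map $k=H^0(\mathcal O_Y(mY))\to H^1(\mathcal O_X((m-1)Y))$ must vanish, so $1\in H^0(\mathcal O_Y(mY))$ lifts to $s\in H^0(X,\mathcal O_X(mY))$ with $s|_Y$ nowhere zero. Together with the canonical section (vanishing along $mY$), this forces $h^0(\mathcal O_X(mY))\ge 2$, and the base locus of $|mY|$, contained in $\Supp(mY)\cap\{s=0\}=\emptyset$, is empty. Hence $mY$ is base-point free; since $(mY)^2=0$, the associated morphism has one-dimensional image, giving $\kappa(X,Y)=1$.

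For $\ch(k)>0$ without the $H^1$-vanishing hypothesis, the plan is to dispose of the obstruction using the absolute Frobenius $F\colon X\to X$. Using $F^*\mathcal O_X(D)=\mathcal O_X(pD)$ and $F^*\mathcal O_Y=\mathcal O_{pY}$, the $r$-th Frobenius pullback of the basic sequence becomes
$$
0\to \mathcal O_X(q(m-1)Y)\to \mathcal O_X(qmY)\to \mathcal O_{qY}(qmY)\to 0,
$$
with $q=p^r$, and the hypothesis $\mathcal O_Y(mY)\cong\mathcal O_Y$ pulls back to $\mathcal O_{qY}(qmY)\cong\mathcal O_{qY}$. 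The lifting obstruction then lies in $H^1(X,\mathcal O_X(q(m-1)Y))$, and the argument proceeds, as in \cite{masek} and in the proof of \cite[Theorem~2.1]{totaro}, by exploiting the $p$-linearity of the Frobenius action on cohomology to see that this class is killed for sufficiently divisible $q$. Once such a lift is secured, the base-point-free and $\kappa=1$ conclusions follow verbatim from the first case applied to $qmY$. The final assertion for $k=\overline{\mathbb F}_p$ is immediate since $\Pic^0(Y)(\overline{\mathbb F}_p)$ is torsion, so $\mathcal O_Y(Y)$ is automatically torsion.

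The main obstacle is the positive-characteristic step. Because $Y^2=0$, the divisor $Y$ is nef but not big, so the Fujita-vanishing strategy of Proof~2 of Theorem~\ref{main-thm} is not directly available, and one must genuinely control the Frobenius action on $H^1(X,\mathcal O_X(q(m-1)Y))$ via the nilpotent thickenings $qY$. The characteristic-zero / $H^1(\mathcal O_X)=0$ case, by contrast, is essentially formal once Mumford's lemma and the isomorphism $\omega_Y\cong\mathcal O_Y$ are in hand.
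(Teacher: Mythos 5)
Your first case ($\mathcal O_Y(Y)$ torsion of order $m$ and $H^1(X,\mathcal O_X)=0$) is correct and complete: Mumford's lemma gives $\omega_Y\cong\mathcal O_Y$ and the vanishing of $h^0$ and $h^1$ of the nontrivial multidegree-zero bundles $\mathcal O_Y(iY)$, the chain of restriction sequences then kills $H^1(X,\mathcal O_X((m-1)Y))$, and the lifted nowhere-vanishing section together with the canonical section makes $|mY|$ base point free with one-dimensional image. This is exactly the standard mechanism behind the references the paper points to (the paper itself gives no proof, only the citations to \cite{masek}, \cite{totaro} and \cite{tanaka}), and the closing remark that $\Pic^0(Y)(\overline{\mathbb F}_p)$ is torsion is also fine.

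The genuine gap is the second assertion, which is the whole point of the proposition in positive characteristic. After Frobenius pullback you correctly reduce to lifting $1\in H^0(\mathcal O_{qY})$ against an obstruction class in $H^1(X,\mathcal O_X(q(m-1)Y))$, but the claim that this class ``is killed for sufficiently divisible $q$ by the $p$-linearity of the Frobenius action'' is not justified and is not true as a general mechanism. A $p$-linear map on a finite-dimensional space kills only the Frobenius-nilpotent part; the semisimple (bijective) part of the action survives every iterate, so iterating Frobenius alone cannot dispose of the obstruction. Worse, for $m>1$ the groups $H^1(X,\mathcal O_X(q(m-1)Y))$ change with $q$, so there is not even a single operator whose iterates you are taking unless you first prove a boundedness/stabilization statement for $h^1(X,nY)$ --- and that boundedness question is precisely the hard dichotomy in the actual proofs: if $h^1(X,nY)$ is unbounded, Riemann--Roch ($\chi(\mathcal O_X(nY))=\chi(\mathcal O_X)$, $h^2(X,nY)=0$ for $n\gg 0$) already produces sections by a different route; if it is bounded, one needs a further argument (Frobenius-semisimple classes are handled via Artin--Schreier type covers or the structure of $\Pic$ over $\overline{\mathbb F}_p$, or, in \cite{tanaka}, a separate analysis), not the vanishing of the obstruction class itself. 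Since you explicitly identify this step as ``the main obstacle'' and then defer it to \cite{masek} and \cite{totaro} with an inaccurate one-line summary, the distinctive positive-characteristic content of the statement remains unproved in your proposal; to complete it you would need to supply the bounded/unbounded $h^1$ dichotomy (or an equivalent device such as passing to a cover trivializing the relevant torsion) rather than asserting that Frobenius kills the class.
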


For the details of our proof of the abundance theorem for 
$\kappa =0$, that is, Theorem \ref{thm0}, 
see \cite[Section 7]{tanaka}. 
 
\section{Relative vanishing theorems}\label{sec2} 

The following theorem is a special case of 
\cite[2.2.5 Corollary]{koko} (see also \cite[Theorem 9.4]{kollar-book}). 
Note that 
it holds over any algebraically closed field. 
We also note that the Kodaira vanishing theorem does not 
always hold for surfaces if the characteristic of the base field 
is positive. 

\begin{thm}[Relative vanishing theorem]\label{thm-a}
Let $\varphi:V\to W$ be a proper birational 
morphism from a smooth surface $V$ to a normal 
surface $W$. Let $\mathcal L$ be a line bundle 
on $V$. 
Assume that 
$$
\mathcal L\equiv_{\varphi} K_V+E+N
$$
where $\equiv_{\varphi}$ denotes the relative numerical equivalence, 
$E$ is an effective $\varphi$-exceptional 
$\mathbb R$-divisor 
on $V$ such that $\llcorner E\lrcorner=0$, and $N$ is a $\varphi$-nef 
$\mathbb R$-divisor on $V$. 
Then $R^1\varphi_*\mathcal L=0$. 
\end{thm}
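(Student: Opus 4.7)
The plan is to reduce the vanishing $R^1\varphi_*\mathcal L=0$, via the theorem on formal functions, to a cohomology vanishing on $1$-dimensional thickenings of the exceptional fibres, and then to prove that vanishing by induction on the cycle using Serre duality on individual exceptional curves.

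Since $R^1\varphi_*\mathcal L$ is coherent on $W$ with support contained in $\varphi(\Exc(\varphi))$, the statement is local on $W$, so I fix a point $w\in\varphi(\Exc(\varphi))$, pass to the local ring at $w$, and let $C_1,\dots,C_r$ be the irreducible components of $\varphi^{-1}(w)$, whose intersection matrix is negative definite. Using this negative-definiteness, I choose an effective $\varphi$-exceptional $\mathbb Q$-divisor $F$ with $\Supp F=\bigcup C_i$ and $-F$ relatively ample, and replace $(E,N)$ by $(E+\varepsilon F,\,N-\varepsilon F)$ for small $\varepsilon>0$: the floor of $E+\varepsilon F$ remains zero and $N-\varepsilon F$ becomes $\varphi$-ample, so I may assume $N$ itself is $\varphi$-ample. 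By formal functions, it then suffices to prove $H^1(Z,\mathcal L|_Z)=0$ for every effective $\varphi$-exceptional Cartier divisor $Z$ supported on $\bigcup C_i$.

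I would induct on the total multiplicity $\sum z_i$ of $Z=\sum z_i C_i$. The key combinatorial lemma is that for every nonzero such $Z$ one can find a component $C=C_{i_0}\subset\Supp Z$ with $(Z-E)\cdot C\leq 0$; this follows from a self-intersection argument: writing $M:=Z-E=M_+-M_-$ with $M_+$ effective and supported on $\Supp Z$ and $M_-$ effective and supported off $\Supp Z$, the assumption $M\cdot C_i>0$ for every $C_i\subset\Supp Z$ would force $M_+^2=M\cdot M_+ + M_+\cdot M_->0$ (since $M_+\cdot M_-\geq 0$), contradicting negative-definiteness of the intersection form on the nonzero $M_+$. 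Granted such $C$, the standard short exact sequence
$$0\to\mathcal L(-(Z-C))|_C\to\mathcal L|_Z\to\mathcal L|_{Z-C}\to 0$$
reduces the problem to the vanishing for $Z-C$ (inductive hypothesis) together with $H^1(C,\mathcal L(-(Z-C))|_C)=0$. Since $C$ is an integral Gorenstein curve with $\omega_C=(K_V+C)|_C$ and
$$(K_V+Z-\mathcal L)\cdot C\equiv_{\varphi}(Z-E-N)\cdot C\leq-N\cdot C<0,$$
Serre duality on $C$ reduces the remaining vanishing to that of global sections of a negative-degree line bundle on an integral complete curve, which is immediate.

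The main technical obstacle is the combinatorial lemma above: one must locate, at every stage of the induction, a component $C_{i_0}\subset\Supp Z$ with $(Z-E)\cdot C_{i_0}\leq 0$, even though $Z-E$ need not be effective, because it can have negative coefficients on components of $\Supp E\setminus\Supp Z$. The decomposition $Z-E=M_+-M_-$ and the negative-definite self-intersection argument, together with the initial perturbation making $N$ relatively ample so as to secure $N\cdot C>0$, form the heart of the proof; once these are in place, the Serre-duality/induction scheme proceeds routinely.
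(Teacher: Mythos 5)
Your proposal is correct, but note that the paper itself gives no proof of Theorem \ref{thm-a}: it simply quotes it as a special case of \cite[2.2.5 Corollary]{koko} (see also \cite[Theorem 9.4]{kollar-book}), and your argument is essentially the standard characteristic-free proof found in those sources --- reduction via the theorem on formal functions to $H^1(Z,\mathcal L|_Z)=0$ for cycles $Z$ supported on a fiber, induction on $Z$ using negative definiteness to locate a component $C$ with $(Z-E)\cdot C\leq 0$, and Serre duality on the integral curve $C$. All the key points check out: the perturbation $(E,N)\mapsto(E+\varepsilon F, N-\varepsilon F)$ preserves $\llcorner E+\varepsilon F\lrcorner=0$ and secures $N\cdot C>0$; the decomposition $Z-E=M_+-M_-$ with $\Supp M_+=\Supp Z$ works because the coefficients of $Z$ are $\geq 1$ while those of $E$ are $<1$; and the resulting inequality $(K_V+Z-\mathcal L)\cdot C=(Z-E-N)\cdot C<0$ is exactly what the duality step needs.
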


As an application of Theorem \ref{thm-a}, we obtain 
Theorem \ref{thm-b}, whose formulation is suitable for our theory of 
log surfaces. 

\begin{thm}\label{thm-b} 
Let $(X, \Delta)$ be a log surface. 
Let $f:X\to Y$ be a proper birational 
morphism onto a normal surface $Y$. 
Assume that one of the following conditions holds. 
\begin{itemize}
\item[(1)] $-(K_X+\Delta)$ is $f$-ample. 
\item[(2)] $-(K_X+\Delta)$ is $f$-nef and $\llcorner \Delta\lrcorner=0$. 
\end{itemize}
Then $R^1f_*\mathcal O_X=0$. 
\end{thm}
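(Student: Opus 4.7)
The plan is to apply the relative vanishing theorem~\ref{thm-a} after a suitable reduction. First, one reduces case~(1) to case~(2): if $-(K_X+\Delta)$ is $f$-ample, then replacing $\Delta$ by $\Delta':=(1-\epsilon)\Delta$ for sufficiently small $\epsilon>0$ gives $\llcorner\Delta'\lrcorner=0$ and keeps $-(K_X+\Delta')=-(K_X+\Delta)+\epsilon\Delta$ $f$-nef, since only finitely many curves are $f$-contracted, and the possibly negative contribution of $\epsilon\Delta$ on each of them is absorbed by the strict $f$-ampleness of $-(K_X+\Delta)$. Since $R^1f_*\mathcal O_X$ depends only on $X$ and $f$, we may assume we are in case~(2).

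Second, one treats the case when $X$ is smooth directly. Write $\Delta=\Delta^v+\Delta^h$, where $\Delta^v$ is the sum of the components of $\Delta$ lying in $\Exc(f)$, and set
$$E:=\Delta^v\qquad\text{and}\qquad N:=-(K_X+\Delta)+\Delta^h,$$
so that $\mathcal O_X\equiv_f K_X+E+N$. By construction $E$ is effective and $f$-exceptional, with $\llcorner E\lrcorner=0$ inherited from $\llcorner\Delta\lrcorner=0$. The divisor $N$ is $f$-nef: $-(K_X+\Delta)$ is $f$-nef by hypothesis, and $\Delta^h\cdot C\geq 0$ for every $f$-exceptional curve $C$ because no component of $\Delta^h$ is $f$-exceptional. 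Theorem~\ref{thm-a} (with $\varphi=f$, $\mathcal L=\mathcal O_X$) then yields $R^1f_*\mathcal O_X=0$.

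For the remaining case ($X$ normal, possibly singular), take a resolution $g:V\to X$ and set $h:=f\circ g:V\to Y$. By normality $g_*\mathcal O_V=\mathcal O_X$, and the Leray spectral sequence provides an injection $R^1f_*\mathcal O_X\hookrightarrow R^1h_*\mathcal O_V$, so it suffices to prove $R^1h_*\mathcal O_V=0$. Apply Theorem~\ref{thm-a} to $h$: using the discrepancy expression
$$-K_V=g^*\bigl(-(K_X+\Delta)\bigr)+g_*^{-1}\Delta-\sum_i a_iE_i\qquad\bigl(a_i=a(E_i,X,\Delta)\bigr),$$
set $N:=g^*(-(K_X+\Delta))+g_*^{-1}\Delta^h$ (still $h$-nef by the same argument as in the smooth case) and $E:=g_*^{-1}\Delta^v+\sum_i(-a_i)E_i$, so that $N+E=-K_V$. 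The main obstacle is arranging that $E$ is effective with $\llcorner E\lrcorner=0$: the $g_*^{-1}\Delta^v$ contribution is under control from $\llcorner\Delta\lrcorner=0$, but the coefficients $-a_i$ need not a priori lie in $[0,1)$. One handles this by choosing $g$ carefully (for instance, the minimal resolution, where all $a_i\leq 0$) and, when necessary, by twisting $\mathcal L=\mathcal O_V$ to $\mathcal O_V(D)$ for a $g$-exceptional effective Cartier divisor $D$ with $g_*\mathcal O_V(D)=\mathcal O_X$, chosen so that $D+\sum_i(-a_i)E_i$ has all coefficients strictly below $1$. This absorption step is the technical heart of the argument.
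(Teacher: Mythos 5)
Your reduction of the singular case to proving $R^1h_*\mathcal O_V=0$ on a resolution $h=f\circ g:V\to Y$ is a dead end, not just a technical difficulty. Since the fibers of $f$ are at most one-dimensional, $R^2f_*(g_*\mathcal O_V)=0$, so the Leray sequence you invoke also shows that $R^1h_*\mathcal O_V$ surjects onto $f_*(R^1g_*\mathcal O_V)$; hence $R^1h_*\mathcal O_V\neq 0$ whenever $X$ has a non-rational singularity. The hypotheses of the theorem do not exclude such singularities: conditions (1) and (2) only constrain $-(K_X+\Delta)$ on $f$-contracted curves, and no klt/lc-type assumption is made on $(X,\Delta)$ (this generality is the whole point of the statement), so $X$ may well carry non-rational points, e.g.\ away from $\Exc (f)$. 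Thus the intermediate statement you aim at is false in general, and the ``absorption step'' you defer cannot be carried out: twisting by an \emph{effective} $g$-exceptional $D$ (needed to keep $g_*\mathcal O_V(D)=\mathcal O_X$) only \emph{increases} the coefficients of your $E$, so it can never bring a coefficient $-a_i\geq 1$ strictly below $1$; and on the minimal resolution the coefficients $-a_i$ reach or exceed $1$ exactly in the non-(numerically-)klt cases, which are precisely the cases your argument must handle. Your case (1) reduction has a separate, smaller gap: replacing $\Delta$ by $(1-\epsilon)\Delta$ may destroy $\mathbb R$-Cartierness, since only $K_X+\Delta$, not $\Delta$ itself, is assumed $\mathbb R$-Cartier; the paper instead takes $\Delta'\sim_{\mathbb R}\Delta+\varepsilon H$ with $H$ an $f$-ample Cartier divisor and $\llcorner\Delta'\lrcorner=0$, which keeps $K_X+\Delta'$ $\mathbb R$-Cartier.

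The missing idea is to twist \emph{down} rather than up, and to descend through an exact sequence instead of through $\mathcal O_V$ itself. The paper takes the minimal resolution $\varphi:Z\to X$, writes $K_Z+\Delta_Z=\varphi^*(K_X+\Delta)$ with $\Delta_Z$ effective, and applies Theorem \ref{thm-a} to $\mathcal L=\mathcal O_Z(-\llcorner\Delta_Z\lrcorner)$ via $-\llcorner\Delta_Z\lrcorner=K_Z+\{\Delta_Z\}-\varphi^*(K_X+\Delta)$, splitting $\{\Delta_Z\}$ into its exceptional part (fractional, so the floor condition is automatic) and its non-exceptional, hence relatively nef, part — exactly your vertical/horizontal decomposition, but applied to a sheaf for which the vanishing is actually true: $R^1\varphi_*\mathcal L=R^1(f\circ\varphi)_*\mathcal L=0$, even though $R^1(f\circ\varphi)_*\mathcal O_Z$ may be nonzero. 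Then, because $\llcorner\Delta\lrcorner=0$ forces $\llcorner\Delta_Z\lrcorner$ to be $\varphi$-exceptional, pushing forward $0\to\mathcal L\to\mathcal O_Z\to\mathcal O_{\llcorner\Delta_Z\lrcorner}\to 0$ gives $0\to\varphi_*\mathcal L\to\mathcal O_X\to\varphi_*\mathcal O_{\llcorner\Delta_Z\lrcorner}\to 0$ with skyscraper cokernel; applying $f_*$ exhibits $R^1f_*\mathcal O_X$ as a quotient of $R^1f_*(\varphi_*\mathcal L)$, which injects by Leray into $R^1(f\circ\varphi)_*\mathcal L=0$. Your smooth-case computation is correct but is subsumed by this argument, and the genuinely singular case cannot be reached by the route you propose.
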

\begin{proof}
Without loss of generality, we may assume that 
$Y$ is affine. 
When $-(K_X+\Delta)$ is $f$-ample, by perturbing 
the coefficients of $\Delta$, we may assume that 
$\llcorner \Delta\lrcorner=0$. 
More precisely, 
let $H$ be an $f$-ample 
Cartier divisor on $X$. 
Then we can find an effective $\mathbb R$-divisor 
$\Delta'$ on $X$ such that 
$\llcorner \Delta'\lrcorner =0$, $\Delta'\sim _{\mathbb R}
\Delta+\varepsilon H$ for a sufficiently small positive 
real number $\varepsilon$, and $-(K_X+\Delta')$ is $f$-ample. 
Let $\varphi:Z\to X$ be the minimal resolution of $X$. 
We set $K_Z+\Delta_Z=\varphi^*(K_X+\Delta)$. 
Note that $\Delta_Z$ is effective. 
Then we have 
$$
-\llcorner \Delta_Z\lrcorner =K_Z+\{\Delta_Z\}-\varphi^*(K_X+\Delta). $$
By Theorem \ref{thm-a}, 
$$
R^1\varphi_*\mathcal O_Z(-\llcorner \Delta_Z\lrcorner)=R^1(f\circ \varphi)_*
\mathcal O_Z(-\llcorner \Delta _Z \lrcorner)=0.$$  
We note that we can write 
$
\{\Delta_Z\}=E+M
$
where $E$ is a $\varphi$-exceptional (resp.~$(f\circ \varphi)$-exceptional) 
effective $\mathbb R$-divisor with $\llcorner E\lrcorner=0$ 
and $M$ is an effective $\mathbb R$-divisor 
such that every irreducible component of $M$ is 
not $\varphi$-exceptional (resp.~$(f\circ \varphi)$-exceptional). 
In this case, $M$ is $\varphi$-nef (resp.~$(f\circ \varphi)$-nef). 
Since 
$$
0\to \mathcal O_Z(-\llcorner \Delta_Z\lrcorner)\to \mathcal O_Z
\to \mathcal O_{\llcorner \Delta_Z\lrcorner}\to 0, $$ 
we obtain 
$$
0\to \varphi_*\mathcal O_Z(-\llcorner \Delta_Z\lrcorner)\to \mathcal O_X\to 
\varphi_*\mathcal O_{\llcorner \Delta_Z\lrcorner}\to 0. 
$$ 
Since $\llcorner \Delta\lrcorner =0$, $\llcorner \Delta_Z\lrcorner$ is $\varphi$-exceptional. 
Therefore, $\varphi_*\mathcal O_{\llcorner \Delta_Z\lrcorner}$ is a skyscraper sheaf on $X$. 
Thus, we obtain 
$$
\cdots \to R^1f_*(\varphi _*\mathcal O_Z(-\llcorner \Delta_Z\lrcorner))\to R^1f_*\mathcal O_X\to 0. $$ 
Since $$R^1f_*(\varphi_*\mathcal O_Z(-\llcorner \Delta_Z\lrcorner))\subset 
R^1(f\circ \varphi)_*\mathcal O_Z(-\llcorner \Delta_Z\lrcorner)=0, $$  
we obtain $R^1f_*\mathcal O_X=0$. 
\end{proof}

We close this section with the following important results. 
For definitions, see \cite[Notation 4.1]{km}. 

\begin{prop}[{cf.~\cite[Proposition 4.11]{km} and 
\cite[Proposition 3.5]{fujino}}]\label{prop53} 
Let $X$ be an algebraic surface defined over an algebraically 
closed field $k$ of arbitrary characteristic. 
\begin{enumerate}
\item Let $(X, \Delta)$ be a numerically dlt pair. 
Then every Weil divisor on $X$ is $\mathbb Q$-Cartier, 
that is, $X$ is $\mathbb Q$-factorial. 
\item Let $(X, \Delta)$ be a numerically lc pair. 
Then it is lc. 
\end{enumerate}
\end{prop}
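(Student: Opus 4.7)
Both parts are local at singular points of $X$, so I would fix $p\in X$ and let $f\colon Y\to X$ denote the minimal resolution over a neighbourhood of $p$, with exceptional primes $\{E_i\}$. By Mumford, the intersection matrix $M=(E_i\cdot E_j)$ is negative definite, so for any $\mathbb R$-Weil divisor $D$ on $X$ the ``numerical pullback'' $f^\natural D=\widetilde D+\sum c_iE_i$ characterised by $f^\natural D\cdot E_j=0$ for all $j$ is well-defined; write $\Delta_Y=\widetilde\Delta+\sum b_iE_i$ for the numerical analogue of $f^{*}(K_X+\Delta)-K_Y$.

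For (1), numerical dlt-ness forces $b_i<1$ for every $i$ and, when $p$ is singular, also $p\notin\llcorner\Delta\lrcorner$; after shrinking we have $\llcorner\Delta_Y\lrcorner=0$ and $-(K_Y+\Delta_Y)\equiv_f 0$, so Theorem~\ref{thm-b}(2) gives $R^1f_*\mathcal O_Y=0$. Hence the singularity is rational. To pass from rationality to $\mathbb Q$-factoriality I would argue in the same spirit as Proof~1 of Theorem~\ref{main-thm}: given a Weil divisor $D$, choose $N$ with $Nf^\natural D$ integral. This line bundle has degree zero on every $E_j$, and since $R^1f_*\mathcal O_Y=0$, the theorem on formal functions together with an inductive lifting argument along effective thickenings of $E$ shows $\mathcal O_Y(Nf^\natural D)$ is trivial on the formal completion of $Y$ along $f^{-1}(p)$. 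Pushforward yields a Cartier divisor on the formal (and hence henselised) neighbourhood of $p$ whose pullback is $Nf^\natural D$, making $ND$ Cartier.

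For (2), I would split into cases at $p$. If some component of $\widetilde\Delta$ meets $f^{-1}(p)$, the M-matrix property of $-M$ (positive definite with non-positive off-diagonal and connected dependency graph) gives $M^{-1}\leq 0$ with strict inequality entrywise; consequently the perturbation $(X,(1-\varepsilon)\Delta)$ for small $\varepsilon>0$ satisfies $b'_i<b_i\leq 1$, so is numerically klt (hence dlt) at $p$, and (1) yields that $X$ is $\mathbb Q$-factorial there, whence $K_X+\Delta$ is $\mathbb Q$-Cartier at $p$. If instead $\widetilde\Delta$ avoids $f^{-1}(p)$, then $\Delta=0$ near $p$ and the question reduces to showing a numerically lc singularity $(X,0)$ has $K_X$ $\mathbb Q$-Cartier; this is settled by the classification of such singularities (klt ones are rational, hence $\mathbb Q$-factorial by (1); the strictly lc ones are simple elliptic, cusp, or quotients thereof, all of which are $\mathbb Q$-Gorenstein by direct inspection of their minimal resolutions). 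With $K_X+\Delta$ now $\mathbb Q$-Cartier, the numerical pullback coincides with the genuine pullback and the bound $b_i\leq 1$ is precisely log canonicity.

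The principal obstacle is the descent in (1): degree zero on every $E_j$ is only a first-order condition, whereas trivialising $\mathcal O_Y(Nf^\natural D)$ on the entire formal neighbourhood of $E$ requires killing obstructions in $H^1$ of each thickening. This is exactly where the characteristic-free vanishing in Theorem~\ref{thm-b} is indispensable, which is why the argument goes through uniformly in arbitrary characteristic. In (2) the remaining delicate point is the disjoint case, where genuine use of the classification of lc surface singularities seems unavoidable without significant additional work.
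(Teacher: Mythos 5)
Your overall route is the one the paper itself intends: the paper gives no written-out proof, but states that the argument of \cite[Proposition 3.5]{fujino} (following \cite[Proposition 4.11]{km}) works in arbitrary characteristic once the relative vanishing of Theorems \ref{thm-a}/\ref{thm-b} supplies rationality and Artin's lemmas (\cite[Lemmas 3.3 and 3.4]{bad}) -- i.e.\ exactly your formal-functions lifting along thickenings of the exceptional curve -- replace \cite[Theorem 4.13]{km}; the remark following the proposition confirms that the classification of the dual graphs of lc surface singularities is used, exactly as in the second case of your part (2). So your skeleton (vanishing, hence rationality, hence $\mathbb Q$-factoriality by Artin-type descent; perturbation plus classification for (2)) is the same as the paper's.

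There is, however, one false step in your part (1): numerical dlt-ness does \emph{not} force $p\notin\llcorner\Delta\lrcorner$ at a singular point $p$, nor $\llcorner\Delta_Y\lrcorner=0$. Take $X$ with an $A_1$-singularity at $p$ and $\Delta=C$ a curve with coefficient one through $p$: on the minimal resolution $K_Y+\widetilde C+\tfrac12 E\equiv_f 0$, the only numerical discrepancy is $-\tfrac12>-1$, so $(X,C)$ is numerically dlt (indeed plt), yet $\llcorner\Delta_Y\lrcorner=\widetilde C\neq 0$, so Theorem \ref{thm-b}(2) does not apply to $(Y,\Delta_Y)$ in the way you invoke it. The repair is cheap: numerical dlt-ness over the singular point only gives $b_i<1$ for the \emph{exceptional} coefficients, and since $f$ is the minimal resolution one also has $b_i\geq 0$; hence $\mathcal O_Y\equiv_f K_Y+\sum b_iE_i+\widetilde\Delta$, and Theorem \ref{thm-a} applies with $E=\sum b_iE_i$ (effective, $f$-exceptional, $\llcorner E\lrcorner=0$) and $N=\widetilde\Delta$, which is $f$-nef because it is effective with no $f$-exceptional component; equivalently, apply Theorem \ref{thm-b}(2) to the pair $(Y,\sum b_iE_i)$. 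This restores $R^1f_*\mathcal O_Y=0$, and the rest of your descent goes through as in the paper's citation -- though you should make explicit that triviality (not merely multidegree zero) of $\mathcal O_Y(Nf^\natural D)$ on the \emph{reduced} exceptional curve already uses rationality; that point, together with the killing of the $H^1$-obstructions on thickenings, is precisely the content of Artin's lemmas that the paper substitutes for \cite[Theorem 4.13]{km}.
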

The proof given in \cite{fujino} 
works over any algebraically closed field once we adopt 
Artin's lemmas (see \cite[Lemmas 3.3 and 3.4]{bad}) 
instead of \cite[Theorem 4.13]{km} since 
the relative Kawamata--Viehweg vanishing 
theorem holds by Theorem \ref{thm-a}. 

\begin{thm}[{cf.~\cite[Theorem 4.12]{km}}]\label{rat}  
Let $X$ be an algebraic surface defined over 
an algebraically closed filed $k$ of arbitrary characteristic. 
Assume that $(X, \Delta)$ is numerically dlt. 
Then $X$ has only rational singularities. 
\end{thm}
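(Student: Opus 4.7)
The statement is local on $X$, so I may assume $X$ is a normal surface germ around a singular point. My plan is to apply the relative vanishing Theorem~\ref{thm-a} with $\mathcal L=\mathcal O_Y$ on the minimal resolution $f\colon Y\to X$. By Proposition~\ref{prop53}(1), $X$ is $\mathbb Q$-factorial, so $K_X$ and $\Delta$ are $\mathbb Q$-Cartier, and one may write
$$
K_Y+\Delta_Y=f^{*}(K_X+\Delta),\qquad \Delta_Y=f^{-1}_{*}\Delta+\sum_i b_i E_i,
$$
where $\{E_i\}$ is the set of $f$-exceptional prime divisors. The relation $K_Y+\Delta_Y\equiv_f 0$ then gives the required $\mathcal O_Y\equiv_f K_Y+E+N$ as soon as $E:=\sum_i b_i E_i$ is an effective $f$-exceptional $\mathbb R$-divisor with $\llcorner E\lrcorner=0$ and $N:=f^{-1}_{*}\Delta$ is an $f$-nef $\mathbb R$-divisor. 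Theorem~\ref{thm-a} would then force $R^1 f_{*}\mathcal O_Y=0$, which is precisely the statement that $X$ has only rational singularities.

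Two of the three needed properties come essentially for free. The divisor $N$ is effective with no $f$-exceptional component, so $N\cdot E_j\geq 0$ for every $j$ and $N$ is $f$-nef. The upper bound $b_i<1$ is exactly the numerically dlt hypothesis, which asserts that the exceptional discrepancies $-b_i$ are $>-1$. The delicate ingredient is the lower bound $b_i\geq 0$. To obtain it, I would use that $f$ is the minimal resolution, so no irreducible exceptional $E_j$ is a $(-1)$-curve; the adjunction formula $K_Y\cdot E_j=2p_a(E_j)-2-E_j^{2}$ together with $E_j^{2}<0$ (from negative-definiteness of the exceptional intersection form) then forces $K_Y\cdot E_j\geq 0$ for every $j$. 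Intersecting $\Delta_Y\equiv_f -K_Y$ with $E_j$ and using $f^{-1}_{*}\Delta\cdot E_j\geq 0$ yields the inequality $D\cdot E_j\leq 0$ for every $j$, where $D:=\sum_i b_i E_i$.

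To conclude $D\geq 0$ I would argue \`a la Zariski. Write $D=D^{+}-D^{-}$ as its positive and negative parts (disjointly supported) and compute
$$
D\cdot D^{-}=D^{+}\cdot D^{-}-(D^{-})^{2}.
$$
On the one hand, $D\cdot D^{-}=\sum_{j\in\Supp D^{-}}(-b_j)(D\cdot E_j)\leq 0$ term by term. On the other hand, $D^{+}\cdot D^{-}\geq 0$ because distinct irreducible curves meet non-negatively, while $(D^{-})^{2}<0$ as soon as $D^{-}\neq 0$ by negative-definiteness. These together force $D^{-}=0$, so $b_i\geq 0$ as required, completing the verification.

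The main obstacle is the lower bound $b_i\geq 0$; once secured, Theorem~\ref{thm-a} does the rest. Both the inequality $K_Y\cdot E_j\geq 0$ on the minimal resolution and the Zariski-style closing argument are characteristic-free, which is why the proof replaces the classical input \cite[Theorem 4.13]{km} by the relative vanishing of Theorem~\ref{thm-a} and runs equally well in arbitrary characteristic, as advertised in the paragraph preceding the statement.
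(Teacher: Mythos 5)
Your proof is correct and is essentially the paper's own argument: the paper deduces Theorem~\ref{rat} from Theorem~\ref{thm-b}~(2), and that reduction amounts to exactly what you do --- pass to the minimal resolution, split the (numerical) pullback of $K_X+\Delta$ into an effective exceptional part $E$ with $\llcorner E\lrcorner=0$ (effectivity from minimality of the resolution, the bound $<1$ from the numerically dlt hypothesis) plus the $f$-nef non-exceptional part, and apply the relative vanishing of Theorem~\ref{thm-a} to $\mathcal O_Y$; your adjunction computation $K_Y\cdot E_j\geq 0$ and the Zariski/negativity step simply make explicit the standard facts the paper leaves implicit, and they are indeed characteristic-free. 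One caveat: the appeal to Proposition~\ref{prop53}~(1) is unnecessary and better avoided. ``Numerically dlt'' is defined (see \cite[Notation 4.1]{km}) through the \emph{numerical} pullback on the minimal resolution, which exists without any $\mathbb Q$-Cartier assumption because the exceptional intersection matrix is negative definite, and Theorem~\ref{thm-a} only requires the relation $\mathcal O_Y\equiv_f K_Y+E+N$, i.e.\ relative numerical equivalence; moreover, the usual proofs of Proposition~\ref{prop53}~(1) (cf.\ \cite[Proposition 4.11]{km}, \cite[Proposition 3.5]{fujino}) obtain $\mathbb Q$-factoriality as a consequence of rationality of the singularities, so invoking it at this point risks circularity. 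Replacing $f^*(K_X+\Delta)$ by the numerical pullback leaves the rest of your argument unchanged.
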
 

Theorem \ref{rat} follows from Theorem \ref{thm-b} (2). 

\begin{rem}
The proof of Proposition \ref{prop53} uses the classification of 
the dual graphs of the exceptional curves of log canonical surface 
singularities. In the framework of \cite{tanaka}, 
we do not need Proposition \ref{prop53} or the classification of 
log canonical surface singularities even for 
the minimal model theory of log canonical surfaces (see \cite[Part 3]{tanaka}). 
So, we are released from the classification of log canonical surface singularities 
when we discuss the minimal model theory of log surfaces. 
\end{rem}


\end{document}